\title{An arithmetic Hilbert-Samuel theorem for singular hermitian line bundles and cusp forms}
\author{Robert Berman}
\address{Chalmers Techniska H\"ogskola, G\"oteborg, Sweden}
\email{robertb@chalmers.se}
\author{Gerard Freixas i Montplet}
\address{CNRS, Institut de Math\'ematiques de Jussieu, Paris, France}
\email{freixas@math.jussieu.fr}
\date{}
\numberwithin{equation}{section}
\theoremstyle{plain}
\newtheorem{theorem}{Theorem}[section]
\newtheorem{proposition}[theorem]{Proposition}
\newtheorem{lemma}[theorem]{Lemma}
\theoremstyle{definition}
\newtheorem{definition}[theorem]{Definition}
\newtheorem{notation}[theorem]{Notation}
\theoremstyle{remark}
\newtheorem{remark}[theorem]{Remark}
\DeclareMathOperator{\Spec}{Spec}
\DeclareMathOperator{\c1}{c_{1}}
\DeclareMathOperator{\SL}{SL}
\DeclareMathOperator{\ari}{ar}
\DeclareMathOperator{\adeg}{\widehat{deg}}
\DeclareMathOperator{\PSH}{PSH}
\DeclareMathOperator{\vol}{vol}
\DeclareMathOperator{\MA}{MA}
\DeclareMathOperator{\rk}{rk}
\DeclareMathOperator{\pet}{Pet}
\newcommand{\OO}{\mathcal{O}}
\newcommand{\CC}{\mathbb{C}}
\newcommand{\Int}{\mathbb{Z}}
\newcommand{\PP}{\mathbb{P}}
\newcommand{\RR}{\mathbb{R}}
\newcommand{\QQ}{\mathbb{Q}}
\newcommand{\pd}{\partial}
\newcommand{\cz}{\overline{z}}
\newcommand{\pX}{\mathscr{X}}
\newcommand{\pL}{\mathcal{L}}
\newcommand{\pE}{\mathcal{E}}
\newcommand{\pF}{\mathcal{F}}
\newcommand{\pT}{\mathcal{T}}
\newcommand{\pK}{\mathcal{K}}
\newcommand{\aL}{\mathscr{L}}
\newcommand{\aK}{\mathscr{K}}
\newcommand{\aM}{\mathscr{M}}
\newcommand{\aN}{\mathscr{N}}
\newcommand{\ov}{\overline}
\newcommand{\subs}{\subsection{}}
\begin{document}
\setcounter{tocdepth}{1}
\setcounter{section}{0}

\begin{abstract}
We prove an arithmetic Hilbert-Samuel type theorem for semi-positive singular hermitian line bundles of finite height. In particular, the theorem applies to the log-singular metrics of Burgos-Kramer-K\"uhn. Our theorem is thus suitable for application to some non-compact Shimura varieties with their bundles of cusp forms. As an application, we treat the case of Hilbert modular surfaces, establishing an arithmetic analogue of the classical result expressing the dimensions of spaces of cusp forms in terms of special values of Dedekind zeta functions.
\end{abstract}

\maketitle

\tableofcontents

\section{Introduction}
Arithmetic intersection theory is an extension of algebraic geometry of schemes over rings of integers of numbers fields, that incorporates complex geometric tools on the analytic spaces defined by their complex points. Its foundations go back to the work of Arakelov \cite{Arakelov}, on an extension of intersection theory to this setting. It was vastly generalized in the work of Gillet-Soul\'e \cite{GS:AIT}, \cite{GS:CC1} and lead to the definition of heights of cycles on arithmetic varieties, with respect to hermitian line bundles, by Bost-Gillet-Soul\'e \cite{BGS}. This is a counterpart in this theory of the notion of geometric degree of a line bundle, and measures the arithmetic complexity of the equations defining a projective arithmetic variety. One of the major achievements in arithmetic intersection theory was the proof of an analogue of the Grothendieck-Riemann-Roch theorem \cite{GS:ARR}, relying on deep results of Bismut and coworkers on analytic torsion. Combining their theorem with work of Bismut-Vasserot \cite{Bismut-Vasserot}, Gillet and Soul\'e were able to derive an analogue of the Hilbert-Samuel theorem, relating the covolumes of lattices of global sections of powers of an hermitian ample line bundle to the height of the variety. An alternative approach avoiding the use of analytic torsion was proposed by Abbes-Bouche \cite{Abbes-Bouche}. In view of its diophantine applications, the arithmetic Hilbert-Samuel theorem has been the object of numerous  generalizations, as for instance in Zhang \cite{Zhang}, Moriwaki \cite{Moriwaki} and Yuan \cite{Yuan}.

Arithmetic intersection theory has also allowed to attach numerical invariants --arithmetic intersection numbers-- to several arithmetic cycles or hermitian vector bundles, for instance those arising from automorphic forms on Shimura varieties. These are involved in the formulation of Kudla's program on generating series of arithmetic intersection numbers and central values of incoherent Eisenstein series, and in conjectures of Maillot-R\"ossler providing an interpretation of logarithmic derivatives of $L$-functions at negative integers \cite{Maillot-Rossler}. A technical difficulty in these statements is the fact that, in general, Shimura varieties are not compact, and need to be compactified. This is achieved through the theory of Baily-Borel \cite{Baily-Borel} and the theory of toroidal compactifications \cite{toroidal}. The latter produces pairs $(X,D)$ formed by a projective variety and a normal crossings divisor $D$ in $X$, the boundary of the compactification. Then the analytic component in arithmetic intersection theory requires an extension: the most natural vector bundles on $X$ come equipped with degenerate hermitian structures, whose singularities are localized along $D$. These are the so called good metrics, introduced by Mumford in \cite{Mumford}. To deal with this difficulty, the arithmetic intersection theory was extended in the work of Burgos-Kramer-K\"uhn \cite{BKK2}-\cite{BKK1}. In particular they introduced a variant of the notion of good hermitian metric, namely the notion of log-singular hermitian metric, for which arakelovian heights can be defined. Their formalism was applied with success in the lines of Kudla's program in Burgos-Bruinier-K\"uhn \cite{BBK}.

Contrary to the arithmetic intersection theory of Gillet-Soul\'e, in the generality of Burgos-Kramer-K\"uhn there is no analogue of the Grothendieck-Riemann-Roch theorem. The obstruction is of an analytic nature: the analytic torsion forms are not well defined when the hermitian structures are singular. The case of modular curves or, more generally, non-compact hyperbolic curves defined over number fields, was studied by the second author. He obtained Riemann-Roch and Hilbert-Samuel type theorems for the sheaves of cusp forms of arbitrary weight \cite{GFM1}-\cite{GFM2}. At the base of his methods are Teichm\"uller theory and the geometry of the Deligne-Mumford compacitifcation of the moduli space of curves. These techniques do not generalize to higher dimensions.

In this article we prove a general arithmetic Hilbert-Samuel theorem for line bundles in adjoint form, i.e. powers of a line bundles twisted with the canonical bundle, and endowed with hermitian metrics with suitable singularities. These are the so called semi-positive metrics of finite energy, appearing in the work of Boucksom-Eyssidieux-Guedj-Zeriahi \cite{BEGZ} (see also \textsection \ref{subsec:finite-energy}), and they form the biggest possible class of singular semi-positive metrics for which the height can be defined and is a finite real number (see Remark \ref{remark:height-extension} below). For instance, this class includes the log-singular metrics of Burgos-Kramer-K\"uhn. For log-singular hermitian line bundles we are actually able to provide a general statement in non necessarily adjoint form. Let $\pX\rightarrow\Spec\Int$ be an arithmetic variety, namely an integral flat projective scheme over $\Int$ with smooth generic fiber $\pX_{\QQ}$. Let $\ov{\aL}$ be a line bundle endowed with a semi-positive metric of finite energy. Assume that $\aL_{\QQ}$ is semi-ample and big, and $\aL$ is nef on vertical fibers. Fix an invertible sheaf $\aK$ such that $\aK_{\QQ}=K_{\pX_{\QQ}}$. For every $k\geq 0$ the cohomology group $H^{0}(\pX,\aL^{\otimes k}\otimes\aK)$ can be equipped with a $L^2$ hermitian structure \textsection \ref{subsection:det-coh}. Also, the height of $\pX$ with respect to $\ov{\aL}$, $h_{\ov{\aL}}(\pX)$, is defined \textsection \ref{subsec:heights}. Our first theorem (Theorem \ref{thm:HS}) is stated as follows.
\begin{theorem}\label{thm:main}
 There is an asymptotic expansion
\begin{displaymath}
 	\adeg H^{0}(\pX,\aL^{\otimes k}\otimes\aK)_{L^2}=h_{\ov{\aL}}(\pX)\frac{k^{d}}{d!}+o(k^{d})\text{ as }k\to+\infty.
\end{displaymath}
\end{theorem}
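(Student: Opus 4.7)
The strategy is to prove the theorem by approximation, reducing the singular case to the smooth semi-positive case via regularization of the hermitian metric at the archimedean places. For smooth semi-positive metrics with $\aL_{\QQ}$ semi-ample and big and $\aL$ nef on vertical fibers, the $k^{d}/d!$ expansion is the arithmetic Hilbert-Samuel theorem in the form established successively by Gillet-Soul\'e, Abbes-Bouche, Zhang, Moriwaki and Yuan; this smooth-metric case may be taken as input, up to the standard reconciliation between L$^{2}$ and sup-norm arithmetic degrees, which by uniform Bergman kernel bounds costs only $O(k^{d-1}\log k)$.

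I would then regularize $\ov{\aL}$ by a decreasing sequence $\ov{\aL}_{j}$ of smooth semi-positive hermitian line bundles with the same underlying algebraic model, modifying only the archimedean metric via Demailly's regularization of plurisubharmonic weights. By the very definition of the finite-energy class and the continuity of the Monge-Amp\`ere energy functional (Boucksom-Eyssidieux-Guedj-Zeriahi), the heights converge, $h_{\ov{\aL}_{j}}(\pX) \to h_{\ov{\aL}}(\pX)$ as $j \to \infty$. Applying the smooth case to each $\ov{\aL}_{j}$ gives an asymptotic $h_{\ov{\aL}_{j}}(\pX)\, k^{d}/d! + o_{j}(k^{d})$, with an error depending on $j$.

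It remains to compare the L$^{2}$ arithmetic degrees computed with respect to $\ov{\aL}$ and to $\ov{\aL}_{j}$. At each archimedean place the change-of-metrics formula expresses the difference as $\tfrac{1}{2}\sum_{\sigma}\int_{\pX_{\sigma}} \log(h_{j}/h)\, B_{k,j}\, dV$, where $B_{k,j}$ denotes the Bergman kernel density of $H^{0}(\pX_{\sigma},\aL^{\otimes k}\otimes\aK)$ for the $j$-th metric. For smooth semi-positive $\ov{\aL}_{j}$ the rescaled measures $k^{-d}B_{k,j}\,dV$ converge weakly to $\MA(\ov{\aL}_{j})$, producing corrections of the form $\frac{k^{d}}{d!}\int \log(h_{j}/h)\,\MA(\ov{\aL}_{j}) + o(k^{d})$. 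The finite-energy hypothesis on $\ov{\aL}$ forces these integrals to tend to $0$ as $j \to \infty$, which is exactly what absorbs the correction and matches the limit of the heights.

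The main obstacle is the interchange of the limits $j \to \infty$ and $k \to \infty$: one needs Bergman kernel asymptotics uniform enough in $j$ to pass to the singular limit, together with a diagonal argument controlling the $j$-dependent $o_{j}(k^{d})$ errors. This is precisely where the adjoint form $\aL^{\otimes k}\otimes\aK$ becomes essential: H\"ormander's L$^{2}$ estimates for $\cpd$ and the Ohsawa-Takegoshi extension theorem yield the required control on $B_{k,j}$ even in the presence of singular weights of finite energy, and allow one to derive singular Bergman kernel asymptotics directly for $\ov{\aL}$ rather than only through the smooth approximations. Combining this uniform Bergman kernel input with the smooth Hilbert-Samuel theorem and the continuity of heights on the finite-energy class then yields the desired $k^{d}/d!$ expansion.
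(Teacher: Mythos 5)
Your overall shape (reduce to the smooth semi-positive case, then approximate) matches the paper's, but the step you yourself flag as ``the main obstacle'' --- the interchange of the limits $j\to\infty$ and $k\to\infty$ --- is exactly where the proof has to live, and your proposed resolution does not close it. A decreasing approximation $h_j\searrow h$ by monotonicity gives you for free only the one-sided bound $\limsup_k \pL_k(\phi)\leq\pE(\phi)$ (in the paper's normalization, where $\pL_k^{\ari}(\phi)=\pL_k^{\ari}(0)+\pL_k(\phi)$ and the height differs from the smooth one by $\pE(\phi)$). The hard direction is the matching lower bound $\liminf_k\pL_k(\phi)\geq\pE(\phi)$, which requires a lower bound on the covolume that is \emph{uniform in $k$} and cannot be extracted from the smooth Hilbert--Samuel theorem applied to each $h_j$ separately, since the $o_j(k^d)$ errors are uncontrolled in $j$. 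Invoking H\"ormander and Ohsawa--Takegoshi to get ``Bergman kernel asymptotics uniform in $j$'' for arbitrary finite-energy weights is an assertion, not an argument: for the lower bound on $\pL_k$ one needs an \emph{upper} bound on the Bergman measure at the singular metric, and no such pointwise bound is available there (the sup of the Bergman function need not even be finite). The paper's actual mechanism is entirely different: it proves $\pL_k(\phi)\to\pE(\phi)$ (Theorem \ref{thm:asymptotics-L-functional}) by showing that $\pF_k=\pL_k-\pE$ is convex along \emph{bounded geodesics} joining $0$ to bounded approximants of $\phi$ --- convexity of $\pL_k$ coming from Berndtsson's positivity of direct images (hence the adjoint twist by $\aK$), affineness of $\pE$ from the homogeneous Monge--Amp\`ere equation --- and then controls the derivative of $\pF_k$ at the smooth endpoint by the local Morse inequality $\beta_{k\phi_0}\leq(1+\delta_k)\MA(\phi_0)$. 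Nothing in your proposal substitutes for this convexity-plus-endpoint-estimate step.

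Two further points. First, your regularization step as stated is not available: on a general $X$ with $\omega_0$ only semi-positive, a finite-energy $\phi\in\pE^{1}(X,\omega_0)$ need not admit a decreasing sequence of \emph{smooth $\omega_0$-psh} approximants (Demailly regularization loses positivity); the paper instead uses decreasing smooth, non-psh, functions together with the psh projection $P_X\phi_j$ for the easy inequality, and bounded (non-smooth) psh approximants for the hard one. Second, your appeal to the literature for the smooth case glosses over the fact that $\aL$ is only semi-ample and big, not ample; the paper has to redo Zhang's sandwich argument with an auxiliary very ample $\aM$, Kodaira's lemma, and the successive-minima Lemma \ref{lemma:minima} to handle this. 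That part is routine, but it is not a black box you can cite verbatim.
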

Let now $D\subset\pX_{\QQ}$ be a divisor with normal crossings. We suppose that the hermitian metric on $\ov{\aL}$ is log-singular, with singularities along $D(\CC)$. Furthermore, let $\ov{\aN}$ be any log-singular hermitian line bundle, with singularities along $D(\CC)$ as well. Fix a smooth volume form on $\pX(\CC)$, invariant under the action of complex conjugation. With respect to this volume form and the metric on $\ov{\aL}^{\otimes k}\otimes\ov{\aN}$, there is a $L^{2}$ hermitian structure on $H^{0}(\pX,\aL^{\otimes k}\otimes\aN)$. In this situation we have our second theorem (Theorem \ref{thm:HS_cor}).

\begin{theorem}\label{thm:main_cor}
There is an asymptotic expansion
\begin{displaymath}
 	\adeg H^{0}(\pX,\aL^{\otimes k}\otimes\aN)_{L^2}=h_{\ov{\aL}}(\pX)\frac{k^{d}}{d!}+o(k^{d})\text{ as }k\to+\infty.
\end{displaymath}
\end{theorem}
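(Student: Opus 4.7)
The plan is to reduce Theorem \ref{thm:main_cor} to Theorem \ref{thm:main}, exploiting the fact that the leading coefficient $h_{\ov{\aL}}(\pX)\, k^{d}/d!$ depends only on $\ov{\aL}$ and not on the auxiliary line bundle used in the $L^{2}$-structure. As a first step we reinterpret the hermitian structure appearing in Theorem \ref{thm:main}: the canonical Hodge pairing on $H^{0}(\pX,\aL^{\otimes k}\otimes\aK)$ coincides with the $L^{2}$-pairing induced by the fixed smooth volume form $dV$ and the smooth metric $h_{K}$ on $\aK$ dual to $dV$. Writing $\aN=\aK\otimes\aM$ with $\aM:=\aN\otimes\aK^{-1}$, the log-singular metric on $\ov{\aN}$ combined with $h_{K}^{-1}$ induces a log-singular metric $h_{\aM}$ on the \emph{fixed} line bundle $\aM$, with singularities along $D(\CC)$. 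Both theorems then reduce to computing $\adeg H^{0}(\pX,\aL^{\otimes k}\otimes\aK\otimes\aM)_{L^{2}}$ for $L^{2}$-structures sharing the volume form and the metric on $\aL^{\otimes k}\otimes\aK$, differing only in the twist $(\aM,h_{\aM})$ versus the trivial twist $(\OO_{\pX},1)$.

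Next, I would approximate $h_{\aM}$ by a sequence of smooth hermitian metrics $h_{\aM}^{\epsilon}$ on $\aM$ via the standard regularization procedures on the compact complex manifold $\pX(\CC)$. For each smooth approximation the desired asymptotic follows from Theorem \ref{thm:main} via a twist-by-a-fixed-line-bundle comparison: since $\aM$ is independent of $k$ and endowed with a smooth metric, the effect on the arithmetic degree of the extra twist by $(\aM,h_{\aM}^{\epsilon})$ is of order $O(k^{d-1})$, in analogy with the comparison estimates used in the Moriwaki--Yuan generalizations of the arithmetic Hilbert-Samuel theorem.

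It remains to control the error introduced by the smooth approximation. Replacing $h_{\aM}^{\epsilon}$ by $h_{\aM}$ rescales the $L^{2}$-norms of sections by a factor $e^{-\phi_{\epsilon}}$ whose potential $\phi_{\epsilon}$ is integrable on $\pX(\CC)$ and vanishes almost everywhere as $\epsilon\to 0$. The corresponding correction to the arithmetic degree is bounded by integrals of $\phi_{\epsilon}$ against Bergman-type measures associated with $\ov{\aL}^{\otimes k}$; because the metric change occurs on the fixed line bundle $\aM$, these corrections remain $O(k^{d-1})$, hence $o(k^{d})$. Sandwiching $\adeg H^{0}(\pX,\aL^{\otimes k}\otimes\aN)_{L^{2}}$ between the resulting smooth-metric bounds and letting $\epsilon\to 0$ yields the claimed expansion. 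The main technical obstacle is to make the sandwich uniform in $k$, which should follow from the finite-energy hypothesis on $\ov{\aL}$ together with the integrability of log-singular potentials against the Monge-Amp\`ere measures already handled in the proof of Theorem \ref{thm:main}.
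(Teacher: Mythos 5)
Your overall strategy --- reduce to Theorem \ref{thm:main} by comparing the twist by $\aN$ with the twist by $\aK$, after observing that the adjoint $L^2$ structure is the one induced by the volume form and its dual metric on $\aK$ --- is the same as the paper's, but the execution has two genuine gaps. First, even after you make the metric on $\aM=\aN\otimes\aK^{-1}$ smooth, the comparison of $\adeg H^{0}(\pX,\aL^{\otimes k}\otimes\aK\otimes\aM)_{L^2}$ with $\adeg H^{0}(\pX,\aL^{\otimes k}\otimes\aK)_{L^2}$ via Zhang-type lemmas is not an off-the-shelf ``$O(k^{d-1})$ twist estimate'': Lemma \ref{lemma:minima} requires an upper bound on the successive minima $\lambda_{d}$ of the modules involved, i.e.\ a full-rank family of integral sections with controlled $L^{2}$ norm, and these $L^{2}$ norms involve the $k$-th power of the \emph{log-singular} metric on $\aL$. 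Zhang's smooth-case bound (and its use in the Moriwaki--Yuan generalizations) does not apply here; the paper must prove a new estimate (Lemma \ref{lemma:bounded_norms}) giving the bound $C^{k+1}((k+1)N)!^{(d-1)/2}$, whose factorial growth comes from integrating powers of $\log|z_{i}|^{-1}$ near $D(\CC)$ and which yields an acceptable $O(k^{d-1}\log k)$ error only because the relevant coranks are $O(k^{d-2})$. Your proposal silently assumes this step.

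Second, the limiting argument $h_{\aM}^{\epsilon}\to h_{\aM}$ does not close. A log-singular metric is unbounded near $D(\CC)$, so the potential $\phi_{\epsilon}=\log(h_{\aM}/h_{\aM}^{\epsilon})$ is unbounded and no smooth metric dominates $h_{\aM}$ from above; the two-sided sandwich of arithmetic degrees you invoke is therefore not available by pointwise monotonicity of norms. Bounding the change of covolume by ``integrals of $\phi_{\epsilon}$ against Bergman-type measures'' only controls the derivative along a path of metrics, and converting that into a bound on the difference requires that the Bergman measures of $\aL^{\otimes k}\otimes\aK\otimes\aM$ put uniformly small mass near $D(\CC)$ for all $k$ --- which is not established and is essentially equivalent to the estimate you are trying to prove. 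The paper avoids the limit entirely: using bigness of $\aL_{\QQ}$ it chooses \emph{integral} sections $t_{0}$ of $\aL^{\otimes k_{0}}\otimes\aK\otimes\aN^{-1}$ and $t_{1}$ of $\aL^{\otimes k_{1}}\otimes\aK^{-1}\otimes\aN$ that vanish along $D$, so that their pointwise norms are continuous (the holomorphic vanishing cancels the logarithmic singularities of the metrics) and multiplication by them gives norm-bounded injections in both directions with cokernels of rank $O(k^{d-2})$. This is also exactly where the rationality of $D$ enters --- a hypothesis your argument never uses, which is a further sign that the real difficulty has been bypassed rather than resolved.
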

We recall that the arakelovian heights appearing in the previous theorem enjoy the usual Northcott's type finiteness properties \cite{GFM}.

The proof exploits recent tools in pluropotential thoery. The connection between pluripotential theory and arithmetic intersection theory is already present in work of Bost \cite{Bost} and Berman-Boucksom \cite{Berman-Boucksom}. In this last reference, the relation between the energy functional, definig the class of finite energy metrics, and secondary Bott-Chern forms and heights, is exploited. As shown in \cite{Berman-Boucksom} the energy functional can be linked to Donaldson type $\mathcal{L}_k$ functionals, that in the arithmetic setting can be expressed in terms of the covolume of the lattices of $k$-th powers of an hermitian line bundle. While \emph{loc. cit.} deals with semi-positive metrics with minimal singularities (which for a semi-ample line bundle means that they are locally bounded), our point is to deal with arbitrary metrics of finite energy. Specially relevant for our purposes is Berndtsson's positivity theorem \cite{Berndtsson}, that is used to prove convexity properties of the $\mathcal{L}_k$ functionals along geodesic paths joining metrics with minimal singularities. This convexity result is then combined with the results of the first author on Bergman kernels for semi-positive (but possibly non-ample) hermitian line bundles \cite{Berman:local-morse}. 

Natural examples of application of theorems \ref{thm:main}--\ref{thm:main_cor} are provided by integral models of log-canonical sheaves $K_{X}(D)$ attached to couples $(X,D)$ formed by a smooth projective variety $X$ over $\QQ$ together with a divisor with normal crossings $D\subset X$. In these situations, under the assumption that $K_{X}(D)$ be semi-ample and big, there is a K\"ahler-Einstein metric of finite energy on $K_{X}(D)_{\CC}$.  The content of the main theorem is then the asymptotic behaviour of the covolumes (with respect to the $L^2$ norms) of the spaces of suitable integral cusp forms. This setting applies, in particular, to arithmetic models of non-compact Shimura varieties.  In section \ref{sec:Hilbert} we will focus on the case of Hilbert modular surfaces, establishing an arithmetic analogue of the classical theorem computing the dimension of the space of cusp forms in terms of a special value of a Dedeking zeta function \cite[Chap. IV, Thm 1.1 and Thm. 4.2]{vdG}. Let $F$ be a real quadratic number field of discriminant $D\equiv 1 \mod 4$. For a sufficiently divisible integer $\ell\geq 1$, we construct some naive arithmetic model of a toroidal compactification of the Hilbert modular surface attached to $\Gamma_{F}(\ell)\subset\SL_{2}(\OO_F)$. Here $\Gamma_{F}(\ell)$ is the principal congruence subgroup of level $\ell$ of $\SL_{2}(\OO_F)$. Our model $\ov{\mathcal{H}}(\ell)$ is projective over $\Spec\Int$ and smooth over the generic fiber, and comes with a natural semi-ample model $\omega$ of the sheaf of Hilbert modular forms of parallel weight $2$. Furthermore, $\omega_{\QQ}$ is big and there is a Kodaira-Spencer isomorphism $\omega_{\QQ}\simeq K_{\ov{\mathcal{H}}(\ell)_{\QQ}}(D)$, where $D$ is the boundary divisor of the toroidal compactification over $\QQ$. Hence, $\omega_{\CC}$ can be equiped with the K\"ahler-Einstein metric which coincides, through the Kodaira-Spencer isomorphism, with the usual pointwise Petersson metric. It is actually known to be log-singular. Now, if $\aK$ is any model of the canonical sheaf, the space of global sections $H^{0}(\ov{\mathcal{H}}(\ell),\omega^{\otimes k}\otimes \aK)$ is an integral structure in the space of Hilbert cusp forms of level $\ell$ and parallel weight $2k+2$. Its $L^2$ metric is the Petersson pairing, that we shall indicate by $\pet$. An application of Theorem \ref{thm:main} (or Theorem \ref{thm:main_cor}) and the results of Bruinier-Burgos-K\"uhn \cite[Thm. 6.4]{BBK} yields the following statement (Theorem \ref{thm:Hilbert}).
\begin{theorem}\label{thm:Hilbert_intro}
The following asymptotic formula holds:
\begin{displaymath}
	\begin{split}
	\adeg H^{0}(\ov{\mathcal{H}}(\ell), &\,\omega^{\otimes k}\otimes\aK)_{\pet}=\\
	&-\frac{k^{3}}{6}d_{\ell}\zeta_{F}(-1)\left(\frac{\zeta_{F}^{\prime}(-1)}{\zeta_{F}(-1)}+\frac{\zeta^{\prime}(-1)}{\zeta(-1)}+\frac{3}{2}+
	\frac{1}{2}\log D\right) +o(k^{3}),
	\end{split}
\end{displaymath}
where $d_{\ell}=[\QQ(\zeta_{\ell}):\QQ][\Gamma_{F}(1):\Gamma_{F}(\ell)]$, and $\zeta_{F}$ is the Dedekind zeta function of $F$.
\end{theorem}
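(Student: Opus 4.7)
The strategy is to apply Theorem \ref{thm:main_cor} with $\pX=\ov{\mathcal{H}}(\ell)$, $\aL=\omega$ endowed with the Petersson/K\"ahler-Einstein metric (transported from $K_{\ov{\mathcal{H}}(\ell)_{\QQ}}(D)$ via the Kodaira-Spencer isomorphism), $\aN=\aK$, and then to identify the resulting arithmetic self-intersection $h_{\ov{\omega}}(\ov{\mathcal{H}}(\ell))$ with the explicit value computed by Bruinier-Burgos-K\"uhn. Since $\dim\ov{\mathcal{H}}(\ell)=3$, the theorem gives the leading term $k^3/6$, matching the shape of the claimed expansion; the entire content of the statement is then the identification of the numerical coefficient.

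First I verify the hypotheses of Theorem \ref{thm:main_cor}. For $\ell$ sufficiently divisible, the line bundle $\omega_{\QQ}$ is semi-ample, via the Baily-Borel embedding applied to a power of $\omega$, and it is big because Hilbert modular surfaces are of general type in weight $2$. The integral model is chosen so that $\omega$ is nef on vertical fibres. On the archimedean side, the Petersson metric is classically log-singular along the boundary divisor $D(\CC)$ of the toroidal compactification (the degeneration along cusp resolutions is logarithmic of the form prescribed by Burgos-Kramer-K\"uhn); in particular, it is a semi-positive metric of finite energy, as recalled in \textsection\ref{subsec:finite-energy}. A volume form invariant under complex conjugation can be chosen from the $\QQ$-structure. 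Under Kodaira-Spencer, the global sections $H^0(\ov{\mathcal{H}}(\ell),\omega^{\otimes k}\otimes\aK)$ are an integral structure in the space of parallel weight $2k+2$ cusp forms, and the induced $L^2$ norm coincides, up to a multiplicative constant absorbed in the $o(k^3)$ error, with the Petersson pairing $\pet$. Hence Theorem \ref{thm:main_cor} yields
\begin{equation*}
\adeg H^{0}(\ov{\mathcal{H}}(\ell),\omega^{\otimes k}\otimes\aK)_{\pet}=h_{\ov{\omega}}(\ov{\mathcal{H}}(\ell))\,\frac{k^{3}}{6}+o(k^{3}).
\end{equation*}

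It remains to input the Bruinier-Burgos-K\"uhn formula \cite[Thm.~6.4]{BBK}, which computes the arithmetic triple self-intersection $\widehat{c}_1(\ov{\omega})^{\cdot 3}$ in their log-singular Arakelov formalism as precisely
\begin{equation*}
-d_{\ell}\,\zeta_{F}(-1)\left(\frac{\zeta_{F}^{\prime}(-1)}{\zeta_{F}(-1)}+\frac{\zeta^{\prime}(-1)}{\zeta(-1)}+\frac{3}{2}+\frac{1}{2}\log D\right).
\end{equation*}
Substituting this into the asymptotic gives the theorem, provided one knows that the height $h_{\ov{\omega}}(\ov{\mathcal{H}}(\ell))$ defined in \textsection\ref{subsec:heights} (as the limit of heights for minimally-singular regularisations, extended to the class of finite energy) agrees with the BBK top intersection number on the naive model.

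The main obstacle is exactly this compatibility of heights. The BBK calculation is carried out on a smooth toroidal integral model, which may differ from our naive $\ov{\mathcal{H}}(\ell)$ at finite places; moreover the height in \textsection\ref{subsec:heights} is produced by an analytic regularisation procedure, whereas BBK work directly with log-singular Green currents. The reconciliation proceeds in two steps: at the archimedean place, both metrics are the Petersson metric, so the non-trivial contributions agree once one checks that the finite-energy regularisation reproduces the BBK star-product on logarithmically singular forms (which follows from the finite-energy extension property in Remark \ref{remark:height-extension}); at finite places, one invokes the birational invariance of the leading coefficient of the arithmetic Hilbert-Samuel expansion, which allows one to transport the BBK value from their toroidal model to $\ov{\mathcal{H}}(\ell)$ modulo terms that are $o(k^3)$. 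Once this identification is in place, the theorem is immediate.
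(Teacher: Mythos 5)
Your overall architecture matches the paper's: apply the arithmetic Hilbert--Samuel theorem (the paper uses Theorem \ref{thm:HS}, since $\omega^{\otimes k}\otimes\aK$ is in adjoint form, but Theorem \ref{thm:HS_cor} would also do) to reduce the statement to the single identity
\begin{displaymath}
h_{\ov{\omega}_{\ell}}(\ov{\mathcal{H}}(\ell))=-d_{\ell}\zeta_{F}(-1)\left(\frac{\zeta_{F}^{\prime}(-1)}{\zeta_{F}(-1)}+\frac{\zeta^{\prime}(-1)}{\zeta(-1)}+\frac{3}{2}+\frac{1}{2}\log D\right),
\end{displaymath}
and then import this from Bruinier--Burgos--K\"uhn. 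You also correctly identify where the real difficulty lies, namely in transporting the BBK intersection number, computed on their models, to the naive model over $\Spec\Int$. (Your archimedean concern is already handled in the paper by equation \eqref{eq:13} together with \cite[Prop. 6.5]{GFM}, which identifies the finite-energy height with the Burgos--Kramer--K\"uhn height for log-singular metrics; no new argument is needed there.)

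However, your resolution of the finite-place discrepancy does not work. You invoke ``birational invariance of the leading coefficient of the arithmetic Hilbert--Samuel expansion'' to claim the two models differ only by $o(k^{3})$. This is false: changing the integral model, or the integral structure of the line bundle, at a finite set of primes changes the height itself --- that is, the coefficient of $k^{3}/3!$ --- typically by a nonzero rational combination of $\log p$ over the primes where the models differ. Moreover, the BBK models live over $\Spec\Int[\zeta_{r},1/r]$, so their intersection numbers only determine the height modulo $\sum_{p\mid r}\QQ\log p$ to begin with. The paper closes exactly this gap by a coprimality device built into the construction of the naive model: one writes $\ell=NM$ with $N,M\geq 3$ coprime, arranges proper maps $p_{N}\colon\ov{\mathcal{H}}(\ell)[1/N]\to\ov{\mathcal{H}}(N)$ and $p_{M}\colon\ov{\mathcal{H}}(\ell)[1/M]\to\ov{\mathcal{H}}(M)$ with $p_{r}^{\ast}\ov{\omega}_{r}=\ov{\omega}_{\ell}$, and uses functoriality of heights plus \cite[Thm. 6.4]{BBK} to obtain \eqref{eq:40} as an equality in $\RR/\sum_{p\mid N}\QQ\log p$ and, separately, in $\RR/\sum_{p\mid M}\QQ\log p$. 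Since $N$ and $M$ are coprime, the two congruences together force the equality in $\RR$. Without this step, or an equivalent one, your argument only pins down the constant up to an undetermined rational combination of logarithms of primes dividing $\ell$.
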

Observe that the result does not depend on the particular model $\aK$ of the canonical sheaf, nor on the models of $\ov{\mathcal{H}}(\ell)$ and $\omega$ over $\Spec\Int$. A more intrinsic formulation would involve only the spaces of integral sections of finite $L^{2}$ norm $H^{0}(\ov{\mathcal{H}}(\ell), \,\omega^{\otimes k+1})_{\pet, L^{2}}$, with respect to the Petersson norm. Unfortunately, the lack of a good integral model of the toroidal compactification prevents us from doing this. Roughly speaking, the difficulty stems from the fact that we ignore if the flat closure of the boundary of $\ov{\mathcal{H}}(\ell)_{\QQ}$ is a Cartier divisor in our naive model over $\Spec\Int$.

Let us finish this introduction by some words about possible applications. The classical arithmetic Hilbert-Samuel theorem for positive smooth hermitian line bundles is usually applied to produce global integral sections with small sup norm. This requires a comparison of $L^{2}$ and sup norms, which is usually proved by Gromov's inequality in the Arakelov geometry litterature. In fact, the distorsion between the norms coincides with the sup-norm of the corresponding Bergman function and all that is needed for the comparison is a bound which is sub-exponential in the power $k$ of the line bundle (this is called the Bernstein-Markov property in the pluripotential litterature). In particular it applies to any continuous metric. However, in the full generality of semi-positive metrics of finite energy, such bounds are not available. Indeed, the sup norm is in general not even finite. However, the question makes sense for log-singular metrics if one considers the subspace of sections vanishing along the divisor of singularities. This is the case for cusp forms on Hilbert modular surfaces, as considered above. But the only examples we know of with sub-exponential distortion is given by cusp forms on modular curves \cite{Parson}, \cite{Xia}. Already this example shows the interest and non-triviality of the question, since the proof invokes the Ramanujan's bounds for the Fourier coefficients of modular forms, proven by Deligne as a consequence of the Weil conjectures. We hope to come back to this topic in the future.

We briefly review the contents of the article. In section \ref{sec:pluripotential} we recall some definitions and facts we need on pluripotenital theory and metrics of finite energy. We give an easy version of the local holomorphic Morse inequalities proven by the first author. In section \ref{sec:geodesics} we recall the notion of bounded geodesic paths between semi-positive hermitian metrics with minimal singularities, and study the differential of the energy functional along such paths, as well as convexity of $\mathcal{L}_k$ functionals through Berndtsson's positivity theorem. The core of the article is Theorem \ref{thm:asymptotics-L-functional} on convergence of $\mathcal{L}_k$ functionals towards energy functionals. In section \ref{sec:heights} we introduce the height with respect to a semi-positive line bundle of finite energy, and we prove Theorem \ref{thm:main}. We also discuss the case of log-singular hermitian line bundles, with singularities along rational normal crossing divisors (Theorem \ref{thm:main_cor}). We conclude with section \ref{sec:Hilbert}, where we construct some naive integral models of toroidal compactifications of Hilbert modular schemes and prove Theorem \ref{thm:Hilbert_intro}. 

\section{Pluripotential theory and Bergman measures}\label{sec:pluripotential}
In this section we recall some basics on pluripotential theory following \cite{BEGZ}, as well as the construction of the $\pL_{k}$ functionals of \cite{Berman-Boucksom}. We discuss their relation with the Monge-Amp\`ere and the Bergman measures.

\subs Let $X$ be a compact complex manifold of dimension $n$ and $L$ a holomorphic line bundle on $X$. We assume that $L$ admits a smooth semi-positive hermitian metric $h_{0}$, whose first Chern form $\c1(L,h_{0})$ we denote $\omega_{0}$. This metric will be fixed once and for all. We introduce the space of $\omega_{0}$-psh functions, namely the convex subset of $L^{1}(X,\RR)$
\begin{displaymath}
	\PSH(X,\omega_{0})=\lbrace \phi\in L^{1}(X,\RR)\text{ u.s.c. on $X$}\mid \omega_{\phi}:=\omega_{0}+dd^{c}\phi\geq 0\rbrace.
\end{displaymath} 
Here $dd^{c}\phi$ is to be interpreted in the sense of currents. Observe we can identify $\PSH(X,\omega_{0})$ with the space of singular semi-positive metrics on $L$, by the rule $\phi\mapsto h_{\phi}:=h_{0}e^{-\phi}$. The space of bounded $\omega_{0}$-psh functions $\PSH(X,\omega_0)\cap L^{\infty}(X)$ is also called the space of $\omega_0$-psh functions with minimal singularities. We will consider several operators defined on subspaces of $\PSH(X,\omega_{0})$.

The positivity assumption on $L$ in particular guarantees that it is \emph{nef}. We will assume throughout that $\vol(L)>0$ or, equivalently, that $L$ is \emph{big}.

\subsection{The Monge-Amp\`ere operator} By the work of Bedford-Taylor \cite{BT}, the operator sending a smooth $\omega_0$-psh function $\phi$ to the semi-positive differential form $(\omega_{0}+dd^{c}\phi)^{n}$ can be extended to $\PSH(X,\omega_{0})\cap L^{\infty}(X)$. 

For our purposes it will be convenient to introduce the following normalized Monge-Amp\`ere operator:
\begin{displaymath}
	\MA(\phi):=\frac{1}{\vol(L)}\omega_{\phi}^{n},
\end{displaymath}
This is a measure putting no mass on pluripolar sets, of total mass at most $1$.
\subsection{The Aubin-Mabuchi energy functional}\label{subsec:finite-energy} We define the energy functional on $\PSH(X,\omega_{0})\cap L^{\infty}(X)$ by the formula
\begin{equation}\label{eq:1}
	\mathcal{E}(\phi):=\frac{1}{(n+1)\vol(L)}\sum_{j=0}^{n}\int_{X}\phi\omega_{\phi}^{j}\wedge\omega_{0}^{n-j}\quad (\in\RR).
\end{equation}
The following proposition summarizes some of its main properties we will need.
\begin{proposition}\label{prop:1}
i. The functional $\pE$ is non-decreasing and continuous in the following cases:
\begin{itemize}
	\item along point-wise decreasing sequences of bounded $\omega_0$-psh functions;
	\item along uniformly convergent sequences of bounded $\omega_0$-psh functions.
\end{itemize}

ii. The G\^ateaux derivative of $\pE$ on $\PSH(X,\omega_{0})\cap L^{\infty}(X)$ is given by
\begin{displaymath}
	d\pE_{\mid\phi}=\MA(\phi).
\end{displaymath}

iii. With respect to the convex structure of $\PSH(X,\omega_{0})\cap L^{\infty}(X)$, $\pE$ is concave. Namely, for given $\omega_0$-psh functions with minimal singularities $\phi_{0}, \phi_{1}$, the function $t\mapsto\pE((1-t)\phi_{0}+t\phi_{1})$ is concave on $[0,1]$.
\end{proposition}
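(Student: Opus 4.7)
The plan is to derive all three statements from a single symmetric polarization identity
\[
\pE(\phi) - \pE(\psi) = \frac{1}{(n+1)\vol(L)}\sum_{j=0}^n \int_X (\phi - \psi)\, \omega_\phi^j \wedge \omega_\psi^{n-j},
\]
valid for $\phi,\psi \in \PSH(X,\omega_0)\cap L^\infty(X)$. To reach this identity I would first establish the derivative formula (ii) in the smooth case by direct differentiation from the definition \eqref{eq:1}. Namely, for smooth $\phi$ and $u$ with $\phi + tu$ remaining $\omega_0$-plurisubharmonic, $\frac{d}{dt}\big|_{t=0}\pE(\phi+tu)$ produces two families of terms: $\int_X u\,\omega_\phi^j\wedge\omega_0^{n-j}$ and $j\int_X \phi\, dd^c u \wedge \omega_\phi^{j-1}\wedge\omega_0^{n-j}$. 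Integration by parts turns the latter into $j\int_X u(\omega_\phi-\omega_0)\wedge\omega_\phi^{j-1}\wedge\omega_0^{n-j}$, after which a reindexing produces a telescoping sum leaving only $\int_X u\,\omega_\phi^n/\vol(L) = \int_X u\,\MA(\phi)$. Integrating this derivative along the affine segment $\phi_t = (1-t)\psi + t\phi$, combined with the binomial expansion of $\omega_{\phi_t}^n$ and the beta integral $\int_0^1 \binom{n}{j} t^j(1-t)^{n-j}\, dt = 1/(n+1)$, yields the polarization identity.

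With the polarization identity at hand, the three assertions follow essentially for free in the smooth setting. Monotonicity: if $\phi\leq\psi$, each integrand $\psi-\phi\geq 0$ is paired with the positive mixed Monge-Amp\`ere measure $\omega_\phi^j\wedge\omega_\psi^{n-j}$, so $\pE(\psi)\geq\pE(\phi)$. Uniform continuity: since each mixed measure has total mass $\vol(L)$, the identity gives the Lipschitz bound $|\pE(\phi)-\pE(\psi)|\leq\|\phi-\psi\|_{L^\infty(X)}$. For concavity, with $\phi_t = (1-t)\phi_0 + t\phi_1$ and $v = \phi_1-\phi_0$, statement (ii) and $\frac{d}{dt}\omega_{\phi_t}^n = n\,dd^c v\wedge\omega_{\phi_t}^{n-1}$ give
\[
\frac{d^2}{dt^2}\pE(\phi_t) = \frac{n}{\vol(L)}\int_X v\, dd^c v\wedge\omega_{\phi_t}^{n-1} = -\frac{n}{\vol(L)}\int_X dv\wedge d^c v\wedge\omega_{\phi_t}^{n-1}\leq 0,
\]
the last inequality following from semi-positivity of $dv\wedge d^c v$ as a real $(1,1)$-form on a compact K\"ahler manifold.

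The remaining and principal difficulty is to promote everything from the smooth setting to bounded $\omega_0$-psh functions. I would use Demailly's regularization: any $\phi\in\PSH(X,\omega_0)\cap L^\infty(X)$ is a decreasing limit of smooth $\omega_{0,\varepsilon}$-psh functions with $\omega_{0,\varepsilon}\to\omega_0$. Bedford-Taylor's convergence theorem for Monge-Amp\`ere operators along decreasing sequences, combined with bounded convergence for the integrand $(\phi-\psi)$, transports the polarization identity to the bounded case, and with it the monotonicity and Lipschitz statements of (i) as well as the concavity (iii). Continuity along a decreasing sequence $\phi_k\searrow\phi$ is then obtained by applying the same Bedford-Taylor continuity directly to the defining sum \eqref{eq:1}. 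The main obstacle I anticipate is controlling this regularization rigorously: coping with the slight mismatch between $\omega_{0,\varepsilon}$- and $\omega_0$-plurisubharmonicity produced by Demailly's theorem, and justifying that the G\^ateaux derivative in (ii) is well-defined and continuous at an arbitrary bounded $\omega_0$-psh $\phi$ rather than only at smooth interior points of the convex cone.
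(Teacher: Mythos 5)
The paper offers no argument of its own here: its ``proof'' is a pointer to \cite[Prop.~2.10]{BEGZ} and \cite[Prop.~4.3--4.4]{Berman-Boucksom}, and what you have written is essentially a reconstruction of the argument contained in those references, so your route is in substance the intended one. Your three building blocks are all sound: the telescoping integration-by-parts computation of $\tfrac{d}{dt}\pE(\phi+tu)$, the beta-integral derivation of the polarization identity $\pE(\phi)-\pE(\psi)=\tfrac{1}{(n+1)\vol(L)}\sum_{j}\int_X(\phi-\psi)\,\omega_\phi^{j}\wedge\omega_\psi^{n-j}$, and the deduction of monotonicity, the Lipschitz bound $|\pE(\phi)-\pE(\psi)|\leq\|\phi-\psi\|_{L^\infty}$ (using that each mixed mass equals $\int_X\omega_0^n=\vol(L)$ since $L$ is nef), and concavity via $-\int_X dv\wedge d^c v\wedge\omega_{\phi_t}^{n-1}\leq 0$. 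The one place where your plan is heavier than necessary is the final passage from smooth to bounded potentials: you do not need Demailly regularization, with its loss of positivity from $\omega_0$ to $\omega_{0,\varepsilon}$. Bedford--Taylor theory already provides, for \emph{bounded} $\omega_0$-psh functions on a compact manifold, both the integration-by-parts formula $\int_X u\,dd^c v\wedge T=\int_X v\,dd^c u\wedge T$ (for $T$ a closed positive current with bounded potentials) and the continuity of mixed Monge--Amp\`ere products along decreasing and along uniformly convergent sequences; this is how \cite{BEGZ} proves the cocycle identity directly in the bounded class, after which (i)--(iii) follow exactly as you describe (for (iii), concavity is obtained by showing $t\mapsto\int_X v\,\MA(\phi_t)$ is non-increasing via the factorization $\omega_{\phi_s}^n-\omega_{\phi_t}^n=(s-t)\,dd^c v\wedge\sum_j\omega_{\phi_s}^{j}\wedge\omega_{\phi_t}^{n-1-j}$, avoiding any literal second derivative). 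A cosmetic point: the positivity of $dv\wedge d^c v$ has nothing to do with $X$ being K\"ahler, and $v=\phi_1-\phi_0$ is only a difference of bounded psh functions, so the integral $\int_X dv\wedge d^c v\wedge S$ must be interpreted in the Bedford--Taylor sense; with that reading your argument is complete.
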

\begin{proof}
We refer to \cite[Prop. 2.10]{BEGZ}, \cite[Prop. 4.3--4.4]{Berman-Boucksom}.
\end{proof}
The preceding statement suggests a unique monotone and uppersemi-continuos extension of $\pE$ to $\PSH(X,\omega_{0})$ by the rule
\begin{displaymath}
	\pE(\phi)=\inf_{\psi\geq\phi}\pE(\psi)\in [-\infty,+\infty[,
\end{displaymath}
where the $\inf$ runs over bounded $\omega_0$-psh functions dominating $\phi$. It is clear that it remains non-decreasing and concave. Following \cite[Def. 2.9]{BEGZ}, the space of \emph{finite energy} functions in $\PSH(X,\omega_{0})$ is defined as
\begin{displaymath}
	\pE^{1}(X,\omega_{0})=\lbrace \phi\in\PSH(X,\omega_{0})\mid \pE(\phi)>-\infty\rbrace.
\end{displaymath}
Functions of finite energy have full Monge-Amp\`ere mass \cite[Prop. 2.11]{BEGZ}. Moreover, the operator $\pE$ extended to $\pE^{1}(X,\omega_{0})$ is still continuous along decreasing sequences and formula \eqref{eq:1} still holds on this space, when the exterior products are interpreted as non-pluripolar products of currents \cite[Prop. 2.10--2.17, Cor. 2.18]{BEGZ}. 

\subsection{Determinant of the cohomology and the functionals $\pL_{k}$}\label{subsection:det-coh} Let $\phi$ be a $\omega_0$-psh function of finite energy. For every $k\geq 0$, the cohomology group $H^{0}(X,L^{\otimes k}\otimes K_{X})$ is endowed with a $L^2$ metric. Indeed, given sections $s_1, s_2$, we write in local coordinates $s_{i}(z)=f_{i}(z)\ell_{i}(z)^{k}dz_{1}\wedge\ldots\wedge dz_{n}$, where the $f_{i}(z)$ are holomorphic functions and the $\ell_{i}$ are local holomorphic sections of $L$. Then we put
\begin{displaymath}
	\langle s_{1},s_{2}\rangle_{k\phi}(z):=f_{1}(z)\ov{f_{2}(z)}h_{\phi}(\ell_{1}(z),\ell_{2}(z))^{k}|dz_{1}\wedge\ldots\wedge dz_{n}|^{2},
\end{displaymath}
where we follow the notation
\begin{displaymath}
	|dz_{1}\wedge\ldots\wedge dz_{n}|^{2}=i^{n} dz_{1}\wedge d\cz_{1}\wedge\ldots\wedge dz_{n}\wedge d\cz_{n}.
\end{displaymath}
By the finite energy condition, the form $\langle s_{1},s_{2}\rangle_{k\phi}$ is an integrable top differential form and we may thus define
\begin{displaymath}
	\langle s_{1},s_{2}\rangle_{k\phi,X}=\int_{X}\langle s_{1},s_{2}\rangle_{k\phi}(z).
\end{displaymath}
The determinant line bundle $\det H^{0}(X,L^{\otimes k}\otimes K_{X})=\bigwedge^{N_{k}} H^{0}(X,L^{\otimes k}\otimes K_{X})$ (with $N_{k}=\dim H^{0}(X,L^{\otimes k}\otimes K_{X})$) inherits a $L^{2}$ metric as well, for which we use the same notation. In other words, given a basis $s_{k}$ of $H^{0}(X,L^{\otimes k}\otimes K_{X})$ we put
\begin{displaymath}
	\langle s_{1}\wedge\ldots\wedge s_{N_k}, s_{1}\wedge\ldots\wedge s_{N_k}\rangle_{k\phi,X}:=\det(\langle s_{i},s_{j}\rangle_{k\phi,X}).
\end{displaymath}
The functional $\pL_{k}$ is defined on $\pE^{1}(X,\omega_{0})$ by
\begin{equation}\label{eq:2}
	\pL_{k}(\phi):=-\frac{1}{k N_{k}}\log\det(\langle s_{i}^{(k)},s_{j}^{(k)}\rangle_{k\phi,X}),
\end{equation} 
where the basis $(s_{j}^{(k)})$ is orthonormal with respect to $\langle\cdot,\cdot\rangle_{0}$, namely $h_{0}$. Observe the definition does not depend on the  choice of orthonormal basis.
\subsection{The Bergman measure} 
As for the operator $\pE$ we will be interested in the G\^ateaux derivative of the functional $\pL_{k}$. It will be expressed in terms of the Bergman measure, that we proceed to recall. Let $\phi$ be of finite energy, and take an orthonormal basis $(t_{j}^{(k)})$ of $H^{0}(X,L^{\otimes k}\otimes K_{X})$, with respect to $h_{\phi}$. Then, according to the notations in \textsection 1.4, the following expression defines a probability measure:
\begin{equation}\label{eq:3}
	\beta_{k\phi}(x):=\frac{1}{N_{k}}\sum_{j=1}^{k}\langle t_{i}^{(k)},t_{i}^{(k)}\rangle_{k\phi}(x).
\end{equation}
We call it the \emph{Bergman measure}. It acts on measurable functions by integration. We are now in position to compute the derivative of $\pL_{k}$.
\begin{proposition}\label{prop:2}
The G\^ateaux derivative of $\pL_{k}$ at $\phi\in\PSH(X,\omega_{0})\cap L^{\infty}(X)$ is given by
\begin{displaymath}
	d\pL_{k\mid\phi}=\beta_{k\phi}.
\end{displaymath}
\end{proposition}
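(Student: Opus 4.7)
The plan is to compute the first variation directly by varying $\phi$ in a bounded direction $u$ and then performing the elementary linear algebra that relates the logarithmic derivative of a Gram determinant to a Bergman function. Fix a basis $(s_{j}^{(k)})$ of $H^{0}(X,L^{\otimes k}\otimes K_{X})$ that is orthonormal for $\langle\cdot,\cdot\rangle_{0,X}$, let $u\in L^{\infty}(X,\RR)$ be the variation (smooth, say), and consider $\phi_{t}=\phi+tu$ for small $t$. Since $h_{\phi_{t}}=h_{\phi}e^{-tu}$, the pointwise inner product changes by $\langle s_{i}^{(k)},s_{j}^{(k)}\rangle_{k\phi_{t}}(x)=\langle s_{i}^{(k)},s_{j}^{(k)}\rangle_{k\phi}(x)\,e^{-ktu(x)}$, so the Gram matrix is
\[
M(t)_{ij}:=\int_{X}\langle s_{i}^{(k)},s_{j}^{(k)}\rangle_{k\phi}(x)\,e^{-ktu(x)}.
\]

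Because $\phi,u\in L^{\infty}(X)$ and $X$ is compact, the integrand is uniformly dominated for $|t|\leq 1$, which legitimates differentiation under the integral sign and yields $\dot M(0)_{ij}=-k\int_{X}u\,\langle s_{i}^{(k)},s_{j}^{(k)}\rangle_{k\phi}$. Then by Jacobi's formula applied to \eqref{eq:2},
\[
\frac{d}{dt}\bigg|_{t=0}\pL_{k}(\phi_{t})=-\frac{1}{kN_{k}}\tr\bigl(M(0)^{-1}\dot M(0)\bigr)=\frac{1}{N_{k}}\int_{X}u(x)\sum_{i,j}(M(0)^{-1})_{ji}\langle s_{i}^{(k)},s_{j}^{(k)}\rangle_{k\phi}(x).
\]

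It remains to recognise the pointwise sum as $N_{k}\beta_{k\phi}$. For this I would introduce an auxiliary orthonormal basis $(t_{a}^{(k)})$ of $(H^{0}(X,L^{\otimes k}\otimes K_{X}),\langle\cdot,\cdot\rangle_{k\phi,X})$ and the change of basis matrix $A$ defined by $t_{a}^{(k)}=\sum_{i}A_{ia}s_{i}^{(k)}$. Orthonormality gives $A^{*}M(0)A=I$, hence $AA^{*}=M(0)^{-1}$, and a direct computation shows
\[
N_{k}\beta_{k\phi}(x)=\sum_{a}\langle t_{a}^{(k)},t_{a}^{(k)}\rangle_{k\phi}(x)=\sum_{i,j}(AA^{*})_{ji}\langle s_{i}^{(k)},s_{j}^{(k)}\rangle_{k\phi}(x),
\]
which coincides with the sum produced by the trace (using that the pointwise Gram matrix is Hermitian). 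Combining the two displays yields $d\pL_{k\mid\phi}(u)=\int_{X}u\,\beta_{k\phi}$, as asserted, and verifies independence of the result from the auxiliary choice of orthonormal basis.

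There is essentially no obstacle; the whole argument is a routine variation-of-determinant calculation. The one place that requires minor care is the dominated-convergence bound justifying differentiation under the integral, which works here precisely because we sit in $\PSH(X,\omega_{0})\cap L^{\infty}(X)$, so both $\phi$ and $u$ are genuinely bounded. Extending this identity to all finite-energy $\phi$ (which is not being claimed in the present statement) would be the substantive difficulty and is addressed later via the geodesic convexity results of Section~\ref{sec:geodesics}.
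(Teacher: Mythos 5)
Your computation is correct and is precisely the standard variation-of-determinant argument that the paper invokes by citing \cite[Lemma 5.1]{Berman-Boucksom}: differentiate the Gram matrix under the integral (legitimate since $\phi,u\in L^{\infty}$ and $X$ is compact), apply Jacobi's formula, and identify $\sum_{i,j}(M(0)^{-1})_{ji}\langle s_{i}^{(k)},s_{j}^{(k)}\rangle_{k\phi}$ with $N_{k}\beta_{k\phi}$ via a change to a $\langle\cdot,\cdot\rangle_{k\phi,X}$-orthonormal basis. No gaps; your closing remark correctly locates where the real work lies (the extension beyond bounded $\phi$, which the paper also defers).
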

\begin{proof}
The proof goes as in \cite[Lemma 5.1]{Berman-Boucksom}.
\end{proof}
\begin{remark}
The variational formulas stated in propositions \ref{prop:1}--\ref{prop:2} can be generalized to $\pE^{1}(X,\omega_{0})$. This is achieved by approximation by functions with minimal singularities. However, in the present article we won't need such level of generality.
\end{remark}
The next statement provides an inequality between the measures $\MA(\phi)$ and $\beta_{k\phi}$. It consists in the easy version of the more general local Morse inequalities proven by the first author \cite{Berman:local-morse}. By the variational formulas and some convexity properties to be stated in the next section, this will yield a comparison of the operators $\pE$ and $\pL_{k}$.
\begin{proposition}{(Local Morse inequalities)}\label{prop:3}
Let $\phi$ be a smooth $\omega_{0}$-psh function. Then there exists a sequence of positive numbers $\delta_{k}\to 0$ such that
\begin{displaymath}
	\beta_{k\phi}\leq (1+\delta_{k})\MA(\phi).
\end{displaymath}
\end{proposition}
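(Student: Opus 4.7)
The plan is to express both measures in terms of their densities against a fixed smooth reference volume form, reducing the statement to a pointwise upper bound on the Bergman function, and then to quote the smooth semi-positive case of Berman's local holomorphic Morse inequalities.

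Pick a smooth positive volume form on $X$, say $dV:=\omega_{0}^{n}/n!$. If $(t_{j}^{(k)})_{j=1}^{N_{k}}$ is an $L^{2}$-orthonormal basis of $H^{0}(X,L^{\otimes k}\otimes K_{X})$ with respect to $h_{\phi}$, define the Bergman function $B_{k\phi}$ by
$$\sum_{j=1}^{N_{k}}\langle t_{j}^{(k)}, t_{j}^{(k)}\rangle_{k\phi}(x)=B_{k\phi}(x)\,dV(x).$$
Then $\beta_{k\phi}=(B_{k\phi}/N_{k})\,dV$, while $\MA(\phi)$ has density $(n!/\vol(L))(\omega_{\phi}^{n}/\omega_{0}^{n})$ against $dV$. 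The asserted inequality $\beta_{k\phi}\leq (1+\delta_{k})\MA(\phi)$ therefore amounts to the pointwise bound
$$B_{k\phi}(x)\leq (1+\delta_{k})\,\frac{n!\,N_{k}}{\vol(L)}\cdot \frac{\omega_{\phi}^{n}(x)}{\omega_{0}^{n}(x)},\qquad x\in X.$$
Using the geometric Hilbert--Samuel asymptotic $N_{k}=(\vol(L)/n!)\,k^{n}+o(k^{n})$ for the big line bundle $L$, the factor on the right becomes $k^{n}(1+o(1))$, so it suffices to prove
$$B_{k\phi}(x)\leq (1+\varepsilon_{k})\,k^{n}\cdot\frac{\omega_{\phi}^{n}(x)}{\omega_{0}^{n}(x)},\qquad \varepsilon_{k}\to 0 \ \text{uniformly in } x\in X.$$

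This last estimate is exactly the smooth, semi-positive case of the pointwise local holomorphic Morse inequalities of \cite{Berman:local-morse}. Its derivation rests on the extremal characterization
$$B_{k\phi}(x)/dV(x)=\sup\bigl\{|s(x)|^{2}_{k\phi}:s\in H^{0}(X,L^{\otimes k}\otimes K_{X}),\ \|s\|_{L^{2}(k\phi)}=1\bigr\},$$
a sub-mean-value inequality for holomorphic sections on shrinking coordinate balls around $x$, and a rescaling of size $k^{-1/2}$ which locally reduces the computation to a Bargmann--Fock model on $\CC^{n}$ with constant curvature $\omega_{\phi}(x)$; for this model the reproducing kernel on the diagonal can be written explicitly and yields the stated inequality at scale $k^{n}$. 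The uniformity in $x$ follows from the smoothness of $\omega_{\phi}$ on the compact manifold $X$, which allows a single rate $\varepsilon_{k}$ to be chosen via a covering argument.

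The main obstacle is thus the pointwise Bergman upper bound itself, a non-trivial analytic input whose general version for singular $\phi$ requires delicate regularization and cutoff arguments. In the smooth semi-positive regime required here, however, no regularization is needed: the peak-section construction, the sub-mean-value inequality and the Bargmann--Fock comparison can all be carried out with uniform constants, and everything else in the argument reduces to the algebraic Hilbert--Samuel asymptotic and the bookkeeping above.
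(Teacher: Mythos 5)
Your proposal follows essentially the same route as the paper's proof: both reduce the measure inequality to the extremal (pointwise) characterization of the Bergman function, apply the sub-mean-value inequality for holomorphic sections on coordinate neighborhoods of radius $\sim k^{-1/2}\log k$, and compare with the constant-curvature Gaussian (Bargmann--Fock) model, deferring the full details to \cite{Berman:local-morse}. The only cosmetic difference is that you phrase the estimate as a density bound against the reference volume $\omega_{0}^{n}/n!$ and invoke the Riemann--Roch asymptotic for $N_{k}$ explicitly, whereas the paper works directly with the measures and the quadratic model $\Phi_{0}=\sum_{i}\lambda_{i}|z_{i}|^{2}$; the analytic content is identical.
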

\begin{proof}
We refer to \cite[Thm. 1.1]{Berman:local-morse}. We give a sketch of the argument. The proof exploits the sub-mean inequality of holomorphic functions and the extremal property of the Bergman kernel. Indeed, recall that
\begin{displaymath}
	N_{k}\beta_{k\phi}(x)=\sup_{s\in H^{0}(X,L^{\otimes k}\otimes K_{X})}\frac{\langle s,s\rangle_{k\phi}(x)}{\langle s,s\rangle_{k\phi,X}}.
\end{displaymath}
In a coordinate neighborhood $V$ centered at $x$, we trivialize $L$ and write $\langle s,s\rangle_{k\phi}(z)=|f(z)|^{2}e^{-k\Phi(z)}$, where $f$ is the holomorphic function corresponding to $s$ under the trivialization and $\Phi$ is a smooth function on $V$ with $dd^{c}\Phi=\omega_{\phi}$. We may choose the trivialization so that
\begin{equation}\label{eq:4}
	\Phi(z)=\Phi_{0}(z)+o(|z|^{2}),\quad \Phi_{0}(z)=\sum_{i}\lambda_{i}|z_{i}|^{2}.
\end{equation}
Here the $\lambda_{i}$ are the eigenvalues of the Hessian of $\Phi$ at $z=0$. Now we apply the sub-mean inequality to $|f(z)|^{2}$ on tori centered at $z=0$ and of radius $r$, and integrate in $r\in [0,R_{k}]$ for some small radius $R_{k}$. We get
\begin{displaymath}
	|f(0)|^{2}\prod_{j}\left(\int_{0}^{R_{k}}e^{-\lambda_{j}r^{2}}d(r^{2})\right)\leq\int_{|z|\leq R_{k}}|f(z)|^{2}e^{-k\Phi_{0}(z)}
	|dz_{1}\wedge\ldots\wedge dz_{n}|^{2}.
\end{displaymath}
If we put $R_{k}=(\log k)/k^{1/2}$ and take \eqref{eq:4} into account, we can replace $\Phi_{0}$ by $\Phi$ in the previous inequality, up to introducing a factor $(1+\delta_{k})$, with $\delta_{k}\to 0$. Finally, one computes the Gaussian integral to derive
\begin{displaymath}
	\langle s,s\rangle_{k\phi}(x)\leq (1+\delta_{k})\MA(\phi)_{x}\int_{B_{R_k}}\langle s,s\rangle_{k\phi}(z).
\end{displaymath}
This concludes the proof.
\end{proof}

\section{Geodesics in the space of metrics}\label{sec:geodesics}
\subs Following ideas of Mabuchi, Semmes and Donaldson, we consider the problem of joining psh functions in $\PSH(X,\omega_{0})$ by \emph{geodesic} paths. The issue of the existence, uniqueness and regularity of geodesics in $\PSH(X,\omega_{0})$ for a suitable riemannian structure  is a delicate one.\footnote{In the literature, $\omega_{0}$ is usually a K\"ahler form and one restricts to the space of K\"ahler potentials, namely those smooth $\phi$ such that $\omega_{0}+dd^{c}\phi>0$.} The present article deals with a weak notion of geodesic, namely bounded geodesics, good enough to derive consequences for the operators $\pE$ and $\pL_{k}$. We anticipate the main features: the energy operator $\pE$ is affine along geodesics, while the operator $\pL_{k}$ is convex. We stress that the last convexity property relies on positivity results for direct images of line bundles in adjoint form, due to Berndtsson \cite{Berndtsson}. The relevant study of bounded geodesics was very recently undertaken in Berman-Berndtsson \cite{Berman-Berndtsson}, but as a courtesy to the reader we have recalled the proofs of the main properties that we will need.

\subsection{Subgeodesics} Let $\phi_{0}$ and $\phi_{1}$ be in $\PSH(X,\omega_{0})$. Let $\pT$ be the Riemann surface with boundary $\pT=[0,1]+i\RR$ and $\pi_{X},\pi_{\pT}$ the natural projections from $X\times\pT$. With the functions $\phi_{0}$ and $\phi_{1}$ we construct in an obvious manner an $i\RR$ invariant function $\phi_{\mid\pd(X\times\pT)}$ on the boundary $\pd(X\times\pT)$. The space of subgeodesics between $\phi_{0}$ and $\phi_{1}$ is by definition
\begin{displaymath}
	\pK:=\lbrace \psi\text{ u.s.c. on } X\times\pT,\psi\in \PSH(\pi^{\ast}\omega_{0},X\times\overset{\circ}{\pT})\text{ and }\psi_{\mid\pd(X\times\pT)}\leq \phi_{\mid\pd(X\times\pT)}\rbrace.
\end{displaymath}
For a subgeodesic $\psi$, we will usually write $\psi_{t}(x)$ instead of $\psi(x,t)$.  
\begin{proposition}\label{prop:4}
Let $\psi$ be a subgeodesic between $\phi_{0}$ and $\phi_{1}$.

i. The function $t\mapsto\pE(\psi_{t})$ is $psh$ on $(0,1)+i\RR$. Moreover, if $\psi$ is locally bounded, then
\begin{displaymath}
	d_{t}d_{t}^{c}\pE(\psi_{t})=\pi_{\pT\ast}((\pi_{X}^{\ast}\omega_{0}+dd^{c}\psi)^{n+1}).
\end{displaymath}

Assume moreover that $\psi_{t}$ does not depend on the imaginary part of $t$. Then:

ii. as a function of $t\in (0,1)$, $t\mapsto\psi_{t}$ is convex;

iii. if $\psi_{t}$ is bounded on $X$ uniformly in $t\in[0,1]$ and continuous (as a function of $t$) at $t=0,1$, then the right derivative of $\pE(\psi_{t})$ at $0$ satisfies
\begin{displaymath}
	\frac{d}{dt}\Big |_{t=0^{+}}\pE(\psi_{t})\leq\int_{X} (\frac{d}{dt}\Big |_{t=0^{+}}\psi_{t})\MA(\psi_{0}).
\end{displaymath}
\end{proposition}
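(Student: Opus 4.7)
The plan is to treat the three parts in order, with part (i) carrying the main technical weight.

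For part (i), first establish the identity
\begin{equation*}
d_t d_t^c \pE(\psi_t) = \pi_{\pT\ast}\bigl((\pi_X^{\ast} \omega_0 + dd^c \psi)^{n+1}\bigr)
\end{equation*}
under the assumption that $\psi$ is locally bounded. The approach is to differentiate the explicit expression \eqref{eq:1} twice in $t$: by Proposition~\ref{prop:1}.ii the first derivative reduces along smooth one-parameter paths to integration against $\MA(\psi_t)$, and a further application of $d_t^c$ combined with integration by parts along the fiber $X$ --- valid for Bedford-Taylor products of locally bounded $\omega_0$-psh functions --- reassembles the bidegree expansion of $(\pi_X^{\ast} \omega_0 + dd^c \psi)^{n+1}$ into its fiber integral. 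The right-hand side is the push-forward of a closed positive $(n+1,n+1)$-current, hence a closed positive $(1,1)$-current on $\pT$, yielding the psh property in the locally bounded case. For a general subgeodesic $\psi$, I would approximate from above by the locally bounded subgeodesics $\psi^{(k)} := \max(\psi,-k) \in \pK$, observe that $\pE(\psi_t^{(k)}) \downarrow \pE(\psi_t)$ pointwise by continuity of $\pE$ along decreasing sequences (the extension to $\pE^{1}(X,\omega_0)$ recalled just after Proposition~\ref{prop:1}), and conclude that $\pE(\psi_t)$ is either identically $-\infty$ or psh on $(0,1)+i\RR$ as a decreasing limit of psh functions.

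For part (ii), the hypothesis that $\psi$ does not depend on $\Imag t$ means that for each fixed $x \in X$ the function $t \mapsto \psi_t(x)$ is subharmonic on the strip $(0,1)+i\RR$ (its $dd^c$ dominates the restriction of $\pi_X^{\ast}\omega_0$, which vanishes on $\{x\}\times\pT$) and is independent of $\Imag t$; since the Laplacian then reduces to $\partial_{\Real t}^{2}$, subharmonicity is equivalent to convexity in $\Real t$.

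For part (iii), I would combine parts (i) and (ii) with the concavity of $\pE$ from Proposition~\ref{prop:1}.iii. The first-order inequality for concave functionals, together with the identification of $d\pE_{\mid\psi_0}$ with $\MA(\psi_0)$, gives
\begin{equation*}
\pE(\psi_t) - \pE(\psi_0) \leq \int_X (\psi_t - \psi_0)\, \MA(\psi_0)
\end{equation*}
for every $t \in (0,1]$, since by hypothesis $\psi_0$ and $\psi_t$ both lie in $\PSH(X,\omega_0)\cap L^{\infty}(X)$. Dividing by $t > 0$: on the left, convexity of $t \mapsto \pE(\psi_t)$ and the continuity hypothesis at $t=0$ (combined with Proposition~\ref{prop:1}.i) ensure that the difference quotients decrease to the right derivative $\frac{d}{dt}\big|_{t=0^{+}} \pE(\psi_t)$. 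On the right, part (ii) shows that $(\psi_t - \psi_0)/t$ decreases pointwise to $\frac{d}{dt}\big|_{t=0^{+}} \psi_t$ and is uniformly dominated from above by $\psi_1 - \psi_0$; monotone convergence against the finite measure $\MA(\psi_0)$ then delivers the claimed inequality.

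The main obstacle lies in part (i): rigorously executing the $dd^c$-computation in the locally bounded case is a calculation in Bedford-Taylor theory on $X \times \pT$, requiring justification of the bidegree expansion of the $(n+1)$-fold wedge product, the commutation of $d_t d_t^c$ with fiber integration, and the integration-by-parts identities that collapse the horizontal/vertical mixed terms. Once this is in place, the truncation argument for general subgeodesics and the derivations in (ii) and (iii) are comparatively routine.
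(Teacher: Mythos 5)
Your proposal follows essentially the same route as the paper: the curvature identity for locally bounded subgeodesics followed by approximation by bounded truncations for the psh property in (i) (the paper defers this computation to \cite[Prop.~6.2]{BBGZ}), restriction to slices $\{x\}\times\pT$ plus independence of $\Imag t$ for (ii), and the concavity inequality $\pE(\psi_t)-\pE(\psi_0)\leq\int_X(\psi_t-\psi_0)\MA(\psi_0)$ combined with convexity of $t\mapsto\psi_t$, the uniform upper bound $\sup_X(\psi_1-\psi_0)$, and monotone convergence for (iii). The argument is correct; you are if anything slightly more explicit than the paper about the truncation $\max(\psi,-k)$ and about why the left-hand difference quotients converge to the right derivative.
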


\begin{proof}[Proof of Proposition \ref{prop:4}]
For the first item, if $\psi$ is locally bounded the curvature equation is easily checked. In particular it implies the psh property. The general case follows by approximation by bounded functions. We refer to \cite[Prop. 6.2]{BBGZ} for further details.

Assume, until the end of the proof, that $\psi_{t}$ is independent of the imaginary part of $t$.

For the second item, the function $u(t)$ is convex on $(0,1)$ by subharmonicity on $(0,1)+i\RR$ and independence of $\text{Im}(t)$.

For the last property, we first recall that $\pE$ is concave and that $d\pE_{\mid\phi}=\MA(\phi)$ on $\PSH(X,\omega_{0})\cap L^{\infty}(X)$. Therefore, we have for $t>0$
\begin{displaymath}
	\frac{\pE(\psi_{t})-\pE(\psi_{0})}{t}\leq \int_{X}\frac{\psi_{t}-\psi_{0}}{t}\MA(\psi_{0}).
\end{displaymath}
Now the function $\psi_{t}$ is convex in $t\in[0,1]$, because it is convex on $(0,1)$ by \emph{ii} and continuous at $t=0,1$. It follows that for $0<t\leq 1$, $(\psi_{t}-\psi_{0})/t$ is uniformly bounded above by $\sup_{X}(\psi_{1}-\psi_{0})<+\infty$. Besides, it is decreasing as $t\searrow 0$, by convexity again. Therefore, by the monotone convergence theorem we conclude
\begin{displaymath}
	\frac{d}{dt}\Big |_{t=0^{+}}\pE(\psi_{t})\leq\int_{X}(\frac{d}{dt}\Big |_{t=0^{+}}\psi_{t})\MA(\psi_{0}),
\end{displaymath}
as was to be shown.
\end{proof}
\subsection{Bounded geodesics}
We maintain the notations of the preceding section. Let $\phi_0,\phi_{1}$ be bounded $\omega_0$-psh function and $\psi$ a subgeodesic between them. We say that $\psi$ is a \emph{bounded geodesic} joining $\phi_{0}$ and $\phi_{1}$ if the following conditions are fulfilled:
\begin{itemize}
	\item $\psi_{t}$ is independent of the imaginary part of $t$;
	\item $\psi_{t}$ is bounded on $X$ uniformly in $t\in [0,1]$, and converges uniformly to $\phi_{0}$ (resp. $\phi_{1}$) as $t\to 0$ (resp. $t\to 1$).
	\item $\psi$ solves the degenerate Monge-Amp\`ere equation on $X\times\overset{\circ}{\pT}$
		\begin{equation}\label{eq:8}
			(\pi^{\ast}\omega_{0}+dd^{c}\psi)^{n+1}=0.
		\end{equation}
\end{itemize}
Observe that the boundedness assumption permits to state \eqref{eq:8} within the frame of Bedford-Taylor theory. 
\begin{proposition}\label{prop:5}
Let $\phi_{t}$ be a geodesic path between $\phi_0, \phi_{1}\in\PSH(X,\omega_{0})\cap L^{\infty}(X)$. Then:

i. the function $t\mapsto\pE(\phi_{t})$ is affine on $[0,1]$;

ii. the following inequality holds: 
\begin{displaymath}
	\pE(\phi_{1})-\pE(\phi_{0})\leq\int_{X}(\frac{d}{dt}\Big |_{t=0^{+}}\phi_{t})\MA(\phi_{0}).
\end{displaymath}
\end{proposition}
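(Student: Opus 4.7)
The plan is to read off both statements directly from Proposition \ref{prop:4}, using the defining Monge--Amp\`ere equation of a bounded geodesic.

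For part (i), I would apply Proposition \ref{prop:4}(i) to the bounded geodesic $\psi=\phi_t$. Since $\phi$ is locally bounded on $X\times\pT$ (boundedness in $t$ is part of the definition of a bounded geodesic), the curvature formula gives
\begin{displaymath}
    d_t d_t^c \pE(\phi_t) = \pi_{\pT\ast}\bigl((\pi_X^{\ast}\omega_0 + dd^c\phi)^{n+1}\bigr).
\end{displaymath}
The geodesic equation \eqref{eq:8} makes the right-hand side vanish, so $\pE(\phi_t)$ is pluriharmonic on $(0,1)+i\RR$. Because $\phi_t$ is independent of $\Imag t$, so is $\pE(\phi_t)$; writing $t=s+ir$, the vanishing of $d_td_t^c$ applied to a function depending only on $s$ reduces to $\partial_s^2 \pE(\phi_s)=0$, so $\pE(\phi_s)$ is affine on $(0,1)$. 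Uniform convergence of $\phi_t$ to $\phi_0$ and $\phi_1$ at the endpoints, combined with the continuity of $\pE$ along uniformly convergent sequences of bounded $\omega_0$-psh functions (Proposition \ref{prop:1}(i)), extends affineness to the closed interval $[0,1]$.

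For part (ii), the bounded geodesic satisfies all the hypotheses of Proposition \ref{prop:4}(iii): it is independent of $\Imag t$, uniformly bounded in $t\in[0,1]$, and continuous at the endpoints by uniform convergence. Applying that statement yields
\begin{displaymath}
    \frac{d}{dt}\Big|_{t=0^{+}}\pE(\phi_t)\leq \int_X \Bigl(\frac{d}{dt}\Big|_{t=0^{+}}\phi_t\Bigr)\MA(\phi_0).
\end{displaymath}
On the other hand, by part (i) the function $t\mapsto\pE(\phi_t)$ is affine on $[0,1]$, so its right derivative at $0$ equals $\pE(\phi_1)-\pE(\phi_0)$. Combining these two facts gives the required inequality.

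The only point that requires a bit of care is the passage in part (i) from $d_td_t^c\pE(\phi_t)=0$ plus $\Imag t$-independence to genuine affineness of the real-variable function $s\mapsto\pE(\phi_s)$, and checking that the approximation argument invoked in Proposition \ref{prop:4}(i) to establish the curvature formula in the merely bounded (as opposed to smooth) setting really applies here: one approximates $\phi$ from above by smooth subgeodesics, uses monotone continuity of $\pE$ from Proposition \ref{prop:1}(i), and passes to the limit in the distributional identity. Once this is granted, everything else is formal, and the remaining work reduces to repackaging Propositions \ref{prop:1} and \ref{prop:4}.
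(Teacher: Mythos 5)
Your proposal is correct and follows essentially the same route as the paper: Proposition \ref{prop:4}(i) plus the geodesic equation \eqref{eq:8} give harmonicity, independence of $\Imag t$ gives affineness on $(0,1)$, uniform convergence at the endpoints plus continuity of $\pE$ (Proposition \ref{prop:1}(i)) extends this to $[0,1]$, and part (ii) follows by combining Proposition \ref{prop:4}(iii) with the identification of the right derivative at $0$ with $\pE(\phi_1)-\pE(\phi_0)$. Your explicit remark that local boundedness of the geodesic lets the curvature formula apply directly (no further approximation needed) is a fair clarification but does not change the argument.
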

\begin{proof}
First of all, by Proposition \ref{prop:4} \emph{i} and equation \eqref{eq:8}, the function $\pE(\phi_{t})$ is harmonic on $(0,1)+i\RR$, and independent of $\text{Im}(t)$ by assumptions on $\phi_{t}$. Therefore it is affine on $(0,1)$. Moreover, $\phi_{t}$ uniformly converges to $\phi_{0}$ (resp. $\phi_1$) as $t\to 0$ (resp. $t\to 1$). Then, by Proposition \ref{prop:1} \emph{ii} we deduce that $\pE(\phi_{t})$ is continuous at $t=0,1$. This shows the first assertion. For the second one, we apply Proposition \ref{prop:4} \emph{iii} and take into account the previous afineness property, that guarantees
\begin{displaymath}
	\frac{d}{dt}\Big |_{t=0^{+}}\pE(\phi_{t})=\pE(\phi_{1})-\pE(\phi_{0}).
\end{displaymath}
The proof is complete.
\end{proof}
\begin{proposition}\label{prop:6}
Let $\phi_0,\phi_{1}\in\PSH(X,\omega_{0})\cap L^{\infty}(X)$. Then there exists a bounded geodesic $\phi_{t}$ between $\phi_{0}$ and $\phi_{1}$.
\end{proposition}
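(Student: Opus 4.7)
The plan is to construct $\phi_{t}$ as the upper envelope of $\pK$, in the spirit of the Perron method for the homogeneous complex Monge-Amp\`ere equation on $X\times\pT$ developed in \cite{Berman-Berndtsson}.

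First I would check that $\pK$ contains bounded $i\RR$-invariant subgeodesics, so that the envelope is nontrivial and has reasonable boundary behaviour. A concrete candidate is
$$\psi^{\flat}(x,t) := (1-\Real(t))\phi_{0}(x) + \Real(t)\phi_{1}(x) - M\,\Real(t)(1-\Real(t)),$$
which lies in $\pK$ once $M$ is chosen large enough that the convexity in $\Real(t)$ dominates the indefinite mixed differentials arising in $dd^{c}\bigl((1-\Real(t))\phi_{0}+\Real(t)\phi_{1}\bigr)$; such a finite $M$ exists because $\phi_{0},\phi_{1}\in L^{\infty}(X)$. Conversely, for any $i\RR$-invariant $\eta\in\pK$ bounded on $X\times\pT$, Proposition \ref{prop:4}.ii together with the boundary inequality and upper semicontinuity give the pointwise bound
$$\eta(x,t)\leq (1-\Real(t))\phi_{0}(x) + \Real(t)\phi_{1}(x),$$
and in particular every such $\eta$ is uniformly bounded above on $X\times\pT$ by $\max(\sup_{X}\phi_{0},\sup_{X}\phi_{1})$.

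Next I would define
$$\phi(x,t) := \bigl(\sup\{\eta(x,t) : \eta\in\pK,\ \eta\text{ is }i\RR\text{-invariant and bounded on }X\times\pT\}\bigr)^{\ast},$$
where the asterisk denotes the usc regularization on $X\times\pT$. Since the indicated family is stable under finite maxima and under usc regularized suprema of uniformly bounded subfamilies, standard Perron arguments yield $\phi\in\pK$, with $i\RR$-invariance preserved. The two-sided sandwich $\psi^{\flat}_{t}\leq\phi_{t}\leq (1-\Real(t))\phi_{0}+\Real(t)\phi_{1}$ then forces $\phi\in L^{\infty}(X\times[0,1])$ and gives uniform convergence $\phi_{t}\to\phi_{0}$ as $t\to 0^{+}$, both barriers agreeing with $\phi_{0}$ at $t=0$ with error $O(\Real(t))$ uniform in $x\in X$; symmetrically at $t=1$.

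The main remaining obstacle, and the heart of the proof, is to establish the degenerate Monge-Amp\`ere equation \eqref{eq:8} on $X\times\overset{\circ}{\pT}$. My approach is a standard balayage argument. On any small coordinate ball $B\Subset X\times\overset{\circ}{\pT}$ where $\pi^{\ast}\omega_{0}=dd^{c}\rho$ for a smooth local potential $\rho$, the function $\rho+\phi$ is a bounded psh function which is \emph{maximal} in the sense of Bedford-Taylor: if there existed a bounded psh $u$ on $B$ with $u=\rho+\phi$ on $\partial B$ and $u>\rho+\phi$ at some interior point, then gluing $u-\rho$ on $B$ with $\phi$ outside would produce a bounded $\pi^{\ast}\omega_{0}$-psh element of $\pK$ strictly exceeding $\phi$ somewhere, contradicting maximality of the envelope. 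The Bedford-Taylor theorem on maximal bounded psh functions then forces $(dd^{c}(\rho+\phi))^{n+1}=(\pi^{\ast}\omega_{0}+dd^{c}\phi)^{n+1}=0$ on $B$, which is \eqref{eq:8}.
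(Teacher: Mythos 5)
Your overall architecture --- a Perron upper envelope of subgeodesics, a two-sided barrier to get boundedness and uniform boundary convergence, and a local balayage argument for the homogeneous Monge--Amp\`ere equation --- is the same as the paper's (which works on the annulus $A=\lbrace 1\le|z|\le e\rbrace$ rather than the strip, and solves the local Dirichlet problem by Bedford--Taylor and gluing rather than invoking the characterization of maximal bounded psh functions; these are cosmetic differences). There is, however, one genuine gap: your lower barrier $\psi^{\flat}$ is not in general a subgeodesic, and the justification ``such a finite $M$ exists because $\phi_{0},\phi_{1}\in L^{\infty}(X)$'' is not correct. Writing $s=\Real(t)$ and adding a local potential $\rho$ of $\omega_{0}$, the complex Hessian of $\rho+(1-s)\phi_{0}+s\phi_{1}-Ms(1-s)$ has $(x,x)$-block $(1-s)\omega_{\phi_{0}}+s\omega_{\phi_{1}}\ge 0$, $(t,t)$-entry $M/2$, and mixed entry $\tfrac{1}{2}\bar\partial_{x}(\phi_{1}-\phi_{0})$; positive semi-definiteness therefore requires
\begin{displaymath}
\tfrac{1}{4}\,i\,\partial(\phi_{1}-\phi_{0})\wedge\bar\partial(\phi_{1}-\phi_{0})\ \le\ \tfrac{M}{2}\bigl((1-s)\omega_{\phi_{0}}+s\omega_{\phi_{1}}\bigr),
\end{displaymath}
a bound on the \emph{gradient} of $\phi_{1}-\phi_{0}$ relative to a possibly degenerate form, which boundedness of $\phi_{0},\phi_{1}$ does not provide. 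Already for smooth data: if $\xi$ is a null direction of $\omega_{\phi_{0}}$ at a point where $\partial_{\xi}(\phi_{1}-\phi_{0})=a\neq 0$, the relevant $2\times 2$ minor near $s=0$ is $\begin{pmatrix} s\,\omega_{\phi_{1}}(\xi,\xi) & a/2\\ \bar a/2 & M/2\end{pmatrix}$, whose determinant is negative for small $s$ no matter how large $M$ is. Since semi-positive, non-K\"ahler $\omega_{0}$ is exactly the setting of the paper, this failure is not exotic. Because the barrier underpins nonemptiness of the family, the lower half of your sandwich, and the uniform boundary convergence, it must be repaired: replace it by the paper's barrier $\chi_{t}=\max\bigl(\phi_{0}-C\Real(t),\ \phi_{1}-C(1-\Real(t))\bigr)$, a maximum of two manifestly $\pi_{X}^{\ast}\omega_{0}$-psh functions, which works for any $C\ge\sup_{X}|\phi_{1}-\phi_{0}|$ and for merely bounded data, and yields the same two-sided estimate $\phi_{0}-C\Real(t)\le\phi_{t}\le\phi_{0}+C\Real(t)$ near $t=0$.

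A smaller point in the balayage step: the glued competitor is modified on a ball $B$ and is therefore not $i\RR$-invariant, so it does not belong to the subfamily over which you took the supremum, and ``contradicting maximality of the envelope'' does not directly apply. Either take the envelope over all of $\pK$ and check a posteriori that it is invariant (by symmetrizing over imaginary translations, or by passing to the annulus as the paper does, where invariance becomes radiality and the rotational symmetrization of any candidate is again a candidate), or note explicitly that the invariant envelope coincides with the full one. With these two repairs, the remainder of your argument --- including the appeal to Bedford--Taylor's theorem that a locally bounded maximal psh function satisfies the homogeneous Monge--Amp\`ere equation --- is sound and matches the paper's proof in substance.
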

\begin{proof}
It will be convenient to introduce the annulus $A=\lbrace 1\leq |z|\leq e\rbrace$, so that $z\to e^{z}$ defines a locally conformal mapping from $\pT$ to $A$. We have the corresponding notion of (sub)geodesics. Radial functions [resp. (sub) geodesics] on $A$ pull-back to functions on $\pT$ independent of $\text{Im}(t)$ [resp. (sub) geodesics]. We may thus work on $X\times A$. We consider the upper envelope
\begin{displaymath}
	\phi:=\sup\lbrace\psi\text{ subgeodesic on } X\times A \text{ between }\phi_{0},\phi_{1}\rbrace.
\end{displaymath} 
Observe the set of subgeodesics under consideration is not empty (for an example, see the barrier below). First we claim that $\phi$ is a radial subgeodesic. It is a $\omega_{0}$-psh function on $X\times\overset{\circ}{A}$: indeed, the upper semi-continuous regularization $\phi^{\ast}$ is a candidate in the $\sup$, hence $\phi=\phi^{\ast}$. In addition, the radial function
\begin{displaymath}
	\widetilde{\phi}_{z}(x)=\sup_{\theta\in [0,2\pi]}\phi_{ze^{i\theta}}(x)
\end{displaymath}
is also a candidate in the $\sup$, so that $\phi=\widetilde{\phi}$ is radial. Secondly, we claim that $\phi$ is bounded and uniformly converges to $\phi_{0}$ (resp. $\phi_{1}$) when $|z|\to 1$ (resp. $|z|\to e$). For this, we follow \cite[Sec. 2.2]{Berndtsson:Bando-Mabuchi} and introduce a barrier:
\begin{displaymath}
	\chi_{z}=\max(\phi_{0}-C\log|z|,\phi_{1}+C(\log|z|-1)).
\end{displaymath}
For $C$ sufficiently large and because $\phi_0,\phi_{1}$ are bounded, this barrier is a candidate in the $\sup$. By \emph{loc. cit.} one has
\begin{displaymath}
	\phi_{0}-C\log|z|\leq \phi_{z}\leq\phi_{0}+C\log|z|
\end{displaymath}
and similarly for $\phi_{1}$. These inequalities show that $\phi_{z}$ is uniformly bounded in $z$ and uniformly converges to $\phi_{0},\phi_{1}$ when $|z|\to 1,e$. 

To conclude, it remains to show that $\phi$ so defined satisfies the degenerate Monge-Amp\`ere equation \eqref{eq:8} on $X\times\overset{\circ}{A}$. The argument is standard and based on the classical Perron method. We provide the details for the sake of completeness (see also \cite[proof of Prop. 2.10]{Berman-Boucksom}). Let $B$ be an open ball, relatively compact in $X\times\overset{\circ}{A}$. We already saw that the function $\phi$ is bounded, in particular on $\ov{B}$. Therefore, by Bedford-Taylor theory \cite[Thm. D]{BT}\footnote{Strictly speaking, \emph{loc. cit.} requires a continuous boundary datum. The bounded case follows by approximation by a decreasing sequence of continuous functions (possible by upper semi-continuity of $\phi$), by the minimum principle \cite[Thm. A]{BT} and the continuity of Monge-Amp�re measures along decreasing sequences of psh functions.} we can find a $\pi^{\ast}_{X}\omega_{0\mid B}$-psh function $\psi$ on $B$, which coincides with $\phi$ on $\pd B$. Then we define
\begin{displaymath}
	\widetilde{\phi}=
	\begin{cases}
		\psi\quad\text{on }\ov{B},\\
		\phi\quad\text{on }(X\times A)\setminus\ov{B}.
	\end{cases}
\end{displaymath}
On the one hand, $\widetilde{\phi}\geq \phi$. Indeed, $\psi$ is a decreasing limit of Perron envelopes on $B$ with boundary values decreasing to $\phi$, thus $\psi\geq\phi_{\mid\ov{B}}$. On the other hand, $\widetilde{\phi}$ is still a psh function in the $\sup$ defining $\phi$. Therefore $\phi=\widetilde{\phi}$. Hence $\phi$ satisfies the degenerate Monge-Amp\`ere equation on $B$. The proof is now complete.
\end{proof}
\subsection{Berndtsson's positivity and convexity of $\pL_{k}$. Consequences}
In the previous sections we studied the behavior of the functional $\pE$ along subgeodesics. We now consider the operator $\pL_{k}$. Together with the variational formulas of $\pE$ and $\pL_{k}$, we are going to establish a comparison between both, at least for big values of $k$.

Let the notations be as before. Let $\phi_{t}$ be a subgeodesic between functions $\phi_{0},\phi_{1}$ with minimal singularities. We assume that $\phi_{t}$ is uniformly bounded. This is for instance the case for geodesic paths. For every $t$, $\phi_{t}$ defines a semi-positively curved singular bounded metric on $L$, $h_{t}=h_{0}e^{-\phi_t}$. Then $\det H^{0}(X,L^{\otimes k}\otimes K_{X})$ inherits a $L^2$ hermitian structure that, we recall, we denote $\langle\cdot,\cdot\rangle_{k\phi_t,X}$. This family of metrics glues into a singular hermitian metric on the constant sheaf over $\pT$ of fiber $\det H^{0}(X,L^{\otimes k}\otimes K_{X})$.\footnote{The resulting family is locally integrable in $t$ by Fubini's theorem and the local integrability of $\phi_{t}\in\PSH(X\times\overset{\circ}{\pT},\pi^{\ast}_{X}\omega_{0})\subset L^{1}_{\text{loc}}(X\times\overset{\circ}{\pT},\RR)$} 

The previous construction can be equivalently seen as the family $L^{2}$ metric on $\det \pi_{\pT\ast}(\pi^{\ast}_{X}(L^{\otimes k})\otimes K_{X\times\pT/\pT})$ attached to the singular semi-positive metric $\pi^{\ast}_{X}(h_{0})e^{-\phi_{t}}$ on $\pi^{\ast}_{X}(L)$. Observe that $\pi_{\pT}$ is a proper submersion. This suggests to use positivity properties for direct images of hermitian line bundles. The main result we need is due to Berndtsson \cite{Berndtsson} in the smooth case and generalized to singular metrics by Berndtsson-P\u{a}un \cite{BerndtssonPaun}.

\begin{proposition}\label{prop:7}
Let $\phi_{t}$ be a uniformly bounded subgeodesic between $\phi_0$ and $\phi_1$. 

i. The function $t\mapsto\pL_{k}(\phi_{t})$ is psh on $(0,1)+i\RR$. 

ii. If $\phi_{t}$ is a geodesic, then $t\mapsto\pL_{k}(\phi_{t})$ is convex on $[0,1]$.
\end{proposition}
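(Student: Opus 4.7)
The plan is to re-interpret the family $(\langle\cdot,\cdot\rangle_{k\phi_{t},X})_{t\in\pT}$ as the $L^{2}$ metric on a direct image, and then to invoke the positivity theorem of Berndtsson, extended to singular metrics by Berndtsson-P\u{a}un. I would begin by considering the proper submersion $\pi_{\pT}\colon X\times\pT\to\pT$ and equipping the pullback line bundle $\pi_{X}^{\ast}L$ with the singular metric $\pi_{X}^{\ast}h_{0}\cdot e^{-\phi}$, whose curvature current is $\pi_{X}^{\ast}\omega_{0}+dd^{c}\phi$. The subgeodesic hypothesis is precisely that this current is semi-positive on $X\times\overset{\circ}{\pT}$, so raising to the $k$-th power yields a semi-positively curved singular metric on $\pi_{X}^{\ast}(L^{\otimes k})$, moreover bounded by the uniform boundedness hypothesis on $\phi_{t}$. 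Since $K_{X\times\pT/\pT}=\pi_{X}^{\ast}K_{X}$, the direct image $\pi_{\pT\ast}(\pi_{X}^{\ast}(L^{\otimes k})\otimes K_{X\times\pT/\pT})$ identifies with the trivial bundle on $\pT$ of fiber $H^{0}(X,L^{\otimes k}\otimes K_{X})$, and the construction of \textsection\ref{subsection:det-coh} endows it with a hermitian metric whose fiber at $t$ is exactly $\langle\cdot,\cdot\rangle_{k\phi_{t},X}$.

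For item \emph{i}, I would apply the Berndtsson-P\u{a}un theorem to conclude that this direct image metric is semi-positively curved in the sense of Griffiths, whence the induced metric on its determinant line bundle is semi-positively curved as well. The basis $(s_{j}^{(k)})$ chosen orthonormal for $h_{0}$ provides a nowhere vanishing holomorphic section $s_{1}^{(k)}\wedge\ldots\wedge s_{N_{k}}^{(k)}$ of this determinant bundle whose squared pointwise norm at $t$ equals $\det(\langle s_{i}^{(k)},s_{j}^{(k)}\rangle_{k\phi_{t},X})$. Since semi-positive curvature of a line bundle is equivalent to the plurisubharmonicity of minus the logarithm of the squared norm of any local non-vanishing holomorphic section, it follows that $-\log\det(\langle s_{i}^{(k)},s_{j}^{(k)}\rangle_{k\phi_{t},X})$, and therefore $\pL_{k}(\phi_{t})$, is psh on $(0,1)+i\RR$.

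For item \emph{ii}, I would observe that a bounded geodesic is in particular a uniformly bounded subgeodesic, so \emph{i} applies. Since $\phi_{t}$ depends only on $\Real(t)$, so does $\pL_{k}(\phi_{t})$, and a psh function of a single real variable is convex, yielding convexity on $(0,1)$. To extend this to the closed interval $[0,1]$ I would verify continuity at the endpoints: the uniform convergence $\phi_{t}\to\phi_{0},\phi_{1}$ as $t\to 0,1$, combined with the uniform boundedness of $\phi_{t}$, allows one to apply dominated convergence to each matrix entry $\langle s_{i}^{(k)},s_{j}^{(k)}\rangle_{k\phi_{t},X}$, forcing convergence of the Gram determinant and of its logarithm. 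The principal obstacle lies in the singular nature of the metrics involved: the smooth-metric version of Berndtsson's theorem does not suffice, and one must rely on the Berndtsson-P\u{a}un generalization to singular semi-positively curved metrics, the uniform boundedness of $\phi_{t}$ being exactly what keeps us within the scope of that extension.
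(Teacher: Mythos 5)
Your proof is correct and follows essentially the same route as the paper: both re-interpret the family of $L^{2}$ metrics as the metric on the direct image $\pi_{\pT\ast}(\pi_{X}^{\ast}(L^{\otimes k})\otimes K_{X\times\pT/\pT})$, invoke Berndtsson's positivity theorem together with the Berndtsson--P\u{a}un extension to singular semi-positive metrics for item \emph{i}, and deduce item \emph{ii} from independence of $\Imag(t)$ plus endpoint continuity via dominated convergence. You merely spell out a bit more explicitly the passage to the determinant bundle, which the paper leaves implicit.
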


\begin{proof}
In the case $\phi_t$ is a smooth subgeodesic, the first property follows from \cite[Thm. 1.1]{Berndtsson}. In the general case, the elaboration on the case of singular metrics \cite[Thm 3.5]{BerndtssonPaun} can be applied. For the second property, convexity on $(0,1)$ is obvious since the geodesic conditions ensure $\phi_t$ does not depend on $\text{Im}(t)$. Also, $\phi_{t}$ uniformly converges to $\phi_0$ (resp. $\phi_{1}$) as $t\to 0$ (resp. $t\to 1$). By the dominate convergence theorem, $\pL_{k}(\phi_t)$ is continuous at $t=0,1$. This concludes the proof.
\end{proof}

We are now in position to state and prove the main theorem of the section.

\begin{theorem}\label{thm:asymptotics-L-functional}
Let $\phi\in\pE^{1}(X,\omega_{0})$. Then we have
\begin{displaymath}
	\lim_{k\to+\infty}\pL_{k}(\phi)=\pE(\phi).
\end{displaymath}
\end{theorem}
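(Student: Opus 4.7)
The plan is a three-stage reduction: smooth case via the Morse inequality, bounded case via geodesic convexity, and finite-energy case via monotone exhaustion. The guiding idea is to read the Morse inequality $\beta_{k\phi}\leq(1+\delta_k)\MA(\phi)$ as a comparison of the G\^ateaux derivatives of $\pL_k$ and $\pE$ (Propositions \ref{prop:1}.ii and \ref{prop:2}), and to integrate this comparison either along the linear path $\phi_t=t\phi$ (smooth case) or along a geodesic with a smooth endpoint (bounded and finite-energy cases).

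\textbf{Smooth case.} Since $\pL_k(\phi+c)=\pL_k(\phi)+c$ and $\pE(\phi+c)=\pE(\phi)+c$, I may assume $\phi\leq 0$. Integrating the variational formulas along $\phi_t=t\phi$ gives
\begin{displaymath}
\pL_k(\phi)-\pE(\phi)=\int_0^1\!\!\int_X\phi\bigl(\beta_{k\phi_t}-\MA(\phi_t)\bigr)\,dt.
\end{displaymath}
For bounded $\phi_t$, both $\beta_{k\phi_t}$ and $\MA(\phi_t)$ are probability measures, so the pointwise bound of Proposition \ref{prop:3} upgrades to the total-variation estimate $|\beta_{k\phi_t}-\MA(\phi_t)|_{\mathrm{TV}}\leq 2\delta_{k,t}$. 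Inspection of the proof sketch of Proposition \ref{prop:3} shows $\delta_{k,t}=O((\log k)/\sqrt{k})$ uniformly in $t\in[0,1]$, since it depends only on $C^3$-norms of $t\phi$, and the smooth case follows.

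\textbf{Bounded and finite-energy cases.} For $\phi\in\PSH(X,\omega_{0})\cap L^{\infty}(X)$, approximate by a Demailly regularization $\phi_j\searrow\phi$ of smooth $\omega_0$-psh functions. Monotonicity of $\pL_k$ in $\phi$ gives $\pL_k(\phi)\leq\pL_k(\phi_j)$, and combined with the smooth case and Proposition \ref{prop:1}.i this yields the upper bound $\limsup_k\pL_k(\phi)\leq\pE(\phi)$. For the lower bound, take the bounded geodesic $\phi^{(j)}_t$ from $\phi_j$ to $\phi$ (Proposition \ref{prop:6}). Convexity of $\pL_k$ along this geodesic (Proposition \ref{prop:7}.ii), together with the variational formula at the smooth endpoint $\phi_j$, plus affineness of $\pE$ (Proposition \ref{prop:5}.i) together with Proposition \ref{prop:5}.ii, gives
\begin{displaymath}
\pL_k(\phi)-\pE(\phi)\geq\pL_k(\phi_j)-\pE(\phi_j)+\int_X\dot\phi^{(j)}_0\bigl(\beta_{k\phi_j}-\MA(\phi_j)\bigr).
\end{displaymath}
The initial velocity $\dot\phi^{(j)}_0$ is uniformly bounded on $X$ by standard Lipschitz estimates for bounded geodesics, so the last integral vanishes by the smooth-case total-variation estimate at $\phi_j$; the first difference vanishes by the smooth case. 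Taking $\liminf_k$ concludes the bounded case. For general $\phi\in\pE^{1}(X,\omega_{0})$, replace $\phi_j$ by $\max(\phi,-j)$, bounded and decreasing to $\phi$: monotone convergence of Gram entries yields $\pL_k(\phi_j)\searrow\pL_k(\phi)$ and Proposition \ref{prop:1}.i (extended to $\pE^1$) yields $\pE(\phi_j)\searrow\pE(\phi)$. The bounded case applied to each $\phi_j$, together with monotonicity, again gives the upper bound; the lower bound is obtained by running the geodesic argument between a fixed smooth reference and $\phi$ using the Berman-Berndtsson extension of bounded geodesics to the finite-energy setting.

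\textbf{Main obstacle.} The most delicate point is the lower bound in the finite-energy stage: the geodesic from a smooth reference $\phi^{\mathrm{sm}}$ to $\phi\in\pE^1$ may have an unbounded initial velocity $\dot\phi_0$, and one must justify the variational formulas and control the residual integral $\int\dot\phi_0\bigl(\beta_{k\phi^{\mathrm{sm}}}-\MA(\phi^{\mathrm{sm}})\bigr)$ without uniform boundedness on $X$. The finite-energy assumption yields $\dot\phi_0\in L^{1}(\MA(\phi^{\mathrm{sm}}))$, and since $\phi^{\mathrm{sm}}$ is smooth the Radon-Nikodym density of $\beta_{k\phi^{\mathrm{sm}}}$ relative to $\MA(\phi^{\mathrm{sm}})$ is bounded independently of $k$ on compact subsets where $\dot\phi_0$ is large; combining these with the Morse total-variation bound should produce the required convergence.
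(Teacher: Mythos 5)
Your skeleton is essentially the paper's: read the local Morse inequality as a comparison of the derivatives of $\pL_k$ and $\pE$, integrate it along a path anchored at a smooth point, and combine Berndtsson convexity of $\pL_k$ with affineness of $\pE$ along bounded geodesics. Your observation that the one-sided bound $\beta_{k\phi}\leq(1+\delta_k)\MA(\phi)$ between probability measures upgrades to a total-variation estimate is correct. But there are two genuine gaps. First, both your upper bound and your bounded-case lower bound rest on a decreasing sequence of \emph{smooth $\omega_0$-psh} functions $\phi_j\searrow\phi$. Demailly regularization does not provide this: for a merely semi-positive $\omega_0$ (the situation here, since $L$ is only semi-ample) the regularized functions are only $(\omega_0+\varepsilon_j\omega)$-psh or have analytic singularities, and decreasing smooth approximation \emph{within} $\PSH(X,\omega_0)$ is not available in this generality. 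The paper avoids it: for the upper bound it takes smooth but \emph{not necessarily psh} approximants $\phi_j\searrow\phi$ and replaces $\pE(\phi_j)$ by $\pE(P_X\phi_j)$ via the psh projection, citing \cite{Berman-Boucksom} for $\lim_k\pL_k(\phi_j)=\pE(P_X\phi_j)$; for the lower bound it anchors the geodesic at the fixed smooth reference $\phi_0=0$, which is always an admissible point for Proposition \ref{prop:3}, rather than at a smooth approximant of $\phi$.

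Second, and more seriously, your passage from the bounded case to $\pE^1$ does not close. Because you apply the Morse inequality at the $j$-dependent point $\phi_j$, your error terms (the $\delta_k^{(j)}$ and the Lipschitz bound on $\dot\phi^{(j)}_0$) depend on $j$; the resulting statement $\liminf_k(\pL_k-\pE)(\psi)\geq 0$ for each fixed bounded $\psi$ carries no uniformity and cannot be passed to a decreasing limit $\psi_j\searrow\phi$. Your proposed repair --- a geodesic from a smooth reference directly to $\phi\in\pE^1$ --- requires finite-energy geodesics, variational formulas at an unbounded endpoint, and control of an unbounded initial velocity, none of which is established; your ``main obstacle'' paragraph names the problem but the Radon--Nikodym claim is unsubstantiated. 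The paper's resolution handles both issues at once: run the bounded geodesic from $0$ to a bounded psh approximant $\psi_j\searrow\phi$ (e.g.\ $\max(\phi,-j)$, normalized so $\psi_j\leq 0$), apply Morse only at $0$, and use the sign $\dot\phi_0\leq\psi_j-0\leq 0$ together with Proposition \ref{prop:5} \emph{ii} to obtain the explicit inequality $\pL_k(\psi_j)-\pE(\psi_j)\geq\delta_k\pE(\psi_j)$ with $\delta_k$ \emph{independent of} $j$; continuity of $\pL_k$ and $\pE$ along decreasing sequences then yields $\pL_k(\phi)-\pE(\phi)\geq\delta_k\pE(\phi)$, and $\delta_k\to 0$ with $\pE(\phi)>-\infty$ concludes. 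You would need to restructure your argument along these lines (fixed smooth anchor, bounded rather than smooth approximants, and a $j$-uniform quantitative inequality) for the proof to go through.
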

\begin{proof}
We begin by showing the inequality
\begin{equation}\label{eq:9}
	\limsup_{k\to +\infty}\pL_{k}(\phi)\leq\pE(\phi).
\end{equation}
Let us consider a decreasing sequence of smooth functions $\phi_{j}\searrow\phi$, which always exists. For every $j$, the quantity $\pL_{k}(\phi_{j})$ is well-defined and clearly satisfies $\pL_{k}(\phi)\leq\pL_{k}(\phi_{j})$. By \cite[Thm. A]{Berman-Boucksom}\footnote{Strictly speaking, \emph{loc. cit} applies to $L^{\otimes k}$. The arguments can be easily adapted to obtain the corresponding results for $L^{\otimes k}\otimes K_{X}$.}
\begin{displaymath}
	\lim_{k}\pL_{k}(\phi_{j})=\pE(P_{X}\phi_{j}),
\end{displaymath}
where $P_{X}\phi_{j}$ is the $\omega_0$-psh projection of $\phi_{j}$, namely
\begin{displaymath}
	P_{X}\phi_{j}=\sup\lbrace\psi\in\PSH(X,\omega_{0})\mid\psi\leq\phi_{j}\rbrace.
\end{displaymath}
By the very definition of this projection, we have the inequalities  $\phi\leq P_{X}\phi_{j}\leq\phi_{j}$ and the sequence $P_{X}\phi_{j}$ is decreasing. Therefore, $P_{X}\phi_{j}$ decreases to $\phi$ and by the monotonicity of the functional $\pE$ we have 
\begin{displaymath}
	\lim_{j}\pE(P_{X}\phi_{j})=\pE(\phi).
\end{displaymath}
This concludes the proof of \eqref{eq:9}.

Now for the inequality
\begin{equation}\label{eq:10}
	\liminf_{k\to+\infty}\pL_{k}(\phi)\geq\pE(\phi).
\end{equation}
Let us introduce the functional on $\pE^{1}(X,\omega_{0})$
\begin{displaymath}
	\pF_{k}(\psi):=\pL_{k}(\psi)-\pE(\psi).
\end{displaymath}
Observe that $\pF_{k}$ is continuous along decreasing sequences, because $\pL_{k}$ and $\pE$ are. Indeed, for $\pL_{k}$ this follows from the dominate convergence theorem, while for $\pE$ this property was already invoked previously (Proposition \ref{prop:1} \emph{i}). Also, it is easily seen to be invariant under translation of $\psi$ by constants. We claim that the inequality
\begin{equation}\label{eq:11}
	\pF_{k}(\phi)\geq \delta_{k}\pE(\phi)
\end{equation}
holds, with $\delta_{k}$ being the sequence of Proposition \ref{prop:3} applied to $\phi_{0}:=0$. This will be enough to conclude. For this, let $\psi_{j}$ be a sequence of $\omega_0$-psh functions with minimal singularities decreasing to $\phi$. Let us fix the index $j$. After possibly making a translation by a constant, we may assume that $\psi_{j}\leq 0$. Let $\phi_{t}$ be a bounded geodesic between $\phi_{0}=0$ and $\phi_{1}:=\psi_{j}$ (Proposition \ref{prop:6}). The function $\pF_{k}(\phi_{t})$ is convex, because $\pL_{k}(\phi_{t})$ is convex (Proposition \ref{prop:7}) and $\pE(\phi_{t})$ is affine (Proposition \ref{prop:5}). Therefore, by convexity and by the variational formulas for $\pL_{k}$ (Proposition \ref{prop:2}) and $\pE$ (Proposition \ref{prop:1})
\begin{equation}\label{eq:12}
	\begin{split}
		\pF_{k}(\psi_{j})=\pF_{k}(\phi_{1})-\pF_{k}(\phi_{0})\geq&\frac{d}{dt}\Big |_{t=0^{+}}\pF_{k}(\phi_{t})\\
		&=\int_{X}(\frac{d}{dt}\Big |_{t=0^{+}}\phi_{t})(\beta_{k\phi_{0}}-\MA(\phi_{0})).
	\end{split}
\end{equation}
Now by Proposition \ref{prop:3} we have $\beta_{k\phi_{0}}-\MA(\phi_{0})\leq \delta_{k}\MA(\phi_{0})$ and by convexity of $\phi_{t}$ (Proposition \ref{prop:4} \emph{ii}) we find
\begin{displaymath}
	\frac{d}{dt}\Big |_{t=0^{+}}\phi_{t}\leq\phi_{1}-\phi_{0}\leq 0.
\end{displaymath}
Combined with \eqref{eq:12} we derive
\begin{displaymath}
	\pF_{k}(\psi_{j})\geq\delta_{k}\int_{X}(\frac{d}{dt}\Big |_{t=0^{+}}\phi_{t})\MA(\phi_{0}).
\end{displaymath}
Finally, we apply Proposition \ref{prop:5} \emph{ii} to get
\begin{displaymath}
	\pF_{k}(\psi_{j})\geq\delta_{k}(\pE(\phi_{1})-\pE(\phi_{0}))=\delta_{k}\pE(\psi_{j}).
\end{displaymath}
Therefore, if we let $j$ tend to $+\infty$, we obtain the desired inequality \eqref{eq:11} and hence \eqref{eq:10}. The proof is complete.
\end{proof}
\begin{remark}
\emph{i}. Along subgeodesics (independent of the imaginary part of the parameter) the operator $\pE$ is convex. However, the right property required in the proof is that $-\pE$ be convex. In general we can only ensure this along geodesics, which shows the necessity of this notion.

\emph{ii}. In view of the previous remark, it is tempting to try the argument of the proof with affine paths $\phi_{t}=(1-t)\phi_{0}+t\phi_{1}$ instead of geodesics. Indeed, along such paths $-\pE$ is actually convex. However, convexity of $\pL_{k}$ along $\phi_{t}$ is in general not guaranteed by Berndtsson's theorem.
 \end{remark}

\section{Metrics of finite energy and arithmetic intersection theory}\label{sec:heights}
\subsection{} Let $\pi:\pX\rightarrow\Spec\Int$ be an arithmetic variety, namely an integral flat projective scheme over $\Int$, with smooth generic fiber $\pX_{\QQ}$. The set of complex points $\pX(\CC)$ has a natural structure of smooth complex analytic space. We denote by $F_{\infty}:\pX(\CC)\rightarrow\pX(\CC)$ the antiholomorphic involution given by the action of complex conjugation. 

Let $\aL$ be an invertible sheaf on $\pX$. We will assume that $\aL_{\QQ}$ is semi-ample\footnote{Recall this means that a sufficiently big power $\aL_{\QQ}^{\otimes k}$ is generated by global sections.} and big over $\pX_{\QQ}$. Then, the elements of pluripotential theory developed in the preceding sections can be applied to $\pX(\CC)$ and $\aL_{\CC}$.
\begin{definition}
A semi-positive arakelovian metric of finite energy on $\aL$, or simply a metric of finite energy, is a singular hermitian metric on $\aL_{\CC}$ of the form $h_{\phi}=h_{0}e^{-\phi}$, where:
\begin{itemize}
	\item $h_{0}$ is a smooth hermitian metric on $\aL_{\CC}$ with semi-positive first Chern form $\omega_{0}:=\c1(\ov{\aL}_{0})$;
	\item $\phi\in\pE^{1}(\pX(\CC),\omega_{0})$;
	\item $h_{\phi}$ is invariant under the action of complex conjugation $F_{\infty}$.
\end{itemize}
\end{definition}
Because $\aL_{\QQ}$ is semi-ample, a smooth metric $h_{0}$ with the listed properties always exists, and may be choosen to be invariant under complex conjugation. We fix $h_{0}$ and $h=h_{\phi}$ once and for all, and write $\ov{\aL}_{0}$ and $\ov{\aL}$ for the corresponding hermitian line bundles. 

\subsection{} We assume given an invertible sheaf $\aK$ on $\pX$, coinciding over the generic fiber with the canonical sheaf: $\aK_{\QQ}=K_{\pX_{\QQ}}$. For every integer $k\geq 0$, the module of global sections $M_{k}:=H^{0}(\pX,\aL^{\otimes k}\otimes\aK)$ is a lattice in the finite dimensional real vector space $M_{k,\RR}:=H^{0}(\pX(\CC),\aL_{\CC}^{\otimes k}\otimes K_{\pX_{\CC}})^{F_{\infty}}$. According to the discussion \textsection \ref{subsection:det-coh}, attached to $h$ there is a natural $L^{2}$ euclidean structure $\|\cdot\|_{2}$ on $M_{k,\RR}$. We can thus compute the covolume of the lattice with respect to this structure. The \emph{arithmetic degree} of $(M_{k},\|\cdot\|_{2})$ is by definition
\begin{displaymath}
	\adeg H^{0}(\pX,\aL^{\otimes k}\otimes\aK)_{L^{2}}:=-\log \vol\left(\frac{M_{k,\RR}}{M_{k}}\right).
\end{displaymath}
We introduce an arithmetic counterpart of the functional $\pL_{k}$ defined in \eqref{eq:2}:
\begin{equation}\label{eq:15}
	\pL_{k}^{\ari}(\phi):=\frac{2}{kN_{k}}\adeg H^{0}(\pX,\aL^{\otimes k}\otimes\aK)_{L^{2}},
\end{equation}
where $N_{k}=\dim H^{0}(\pX_{\CC},\aL_{\CC}^{\otimes k}\otimes\aK_{\CC})$, $k\gg 0$, and we recall that $h=h_{\phi}=h_{0}e^{-\phi}$. Because $\aL_{\QQ}$ is nef and big, by the Kawamata-Viehweg vanishing theorem \cite[Thm. 4.3.1]{Lazarsfeld:I} we have $H^{i}(\pX_{\CC},\aL_{\CC}^{\otimes k}\otimes\aK_{\CC})=0$ for $i,k\geq 1$. The Riemann-Roch theorem then provides the estimate
\begin{displaymath}
	N_{k}=\frac{k^{d-1}}{(d-1)!}\deg\aL_{\CC}+o(k^{d-1}).\footnote{An elementary argument shows that, by nefness of $\aL_{\QQ}$ and projectivity of $\pX_{\QQ}$, the higher cohomology groups are $o(k^{d-1})$. This is enough to obtain the estimate.}
\end{displaymath}
We bring the reader's attention to the factor 2 in the definition \eqref{eq:15}, included to ensure the compatibility with the analytic $\pL_{k}$ functional dealt with so far. With this normalisation, given another $\omega_0$-psh function $\phi^{\prime}$ of finite energy, the following relation with the $\pL_{k}$ functional is readily shown:
\begin{equation}\label{eq:16}
	\pL_{k}^{\ari}(\phi)-\pL_{k}^{\ari}(\phi^{\prime})=\pL_{k}(\phi)-\pL_{k}(\phi^{\prime}).
\end{equation}
In this expression, the $\pL_{k}$ functional is for the line bundle $\aL_{\CC}$ on $\pX(\CC)$ and depends on the choice of a fixed orthonormal basis of $M_{k,\CC}$, with respect to the $L^{2}$ metric attached to $h_{0}$. In particular, by choosing $\phi^{\prime}=\phi_{0}=0$, we thus have
\begin{equation}
	\pL_{k}^{\ari}(\phi)=\pL_{k}^{\ari}(\phi_{0})+\pL_{k}(\phi).
\end{equation}
\subsection{}\label{subsec:heights} The height $h_{\ov{\aL}_{0}}(\pX)$ of $\pX$ with respect to $\ov{\aL}_{0}$ (more generally of any cycle in $\pX$) has been defined by Bost-Gillet-Soul\'e \cite[Sec. 3]{BGS} by means of higher dimensional arithmetic intersection theory. It is an arithmetic analogue of the degree of a variety. When the metric $h$ is logarithmically singular in the sense of Burgos-Kramer-K\"uhn \cite[Sec. 7]{BKK1}, the generalized arithmetic intersection theory of \emph{loc. cit.} still allows to define the height of $\pX$ with respect to $\ov{\aL}$. It satisfies
\begin{equation}\label{eq:13}
	h_{\ov{\aL}}(\pX)=h_{\ov{\aL}_{0}}(\pX)+\frac{1}{2}d(\deg\aL_{\CC})\pE(\phi),\quad d=\dim\pX,
\end{equation}
where the energy $\pE(\phi)$ is computed with respect to $\omega_{0}$ \cite[Prop. 6.5]{GFM}. We refer to \emph{loc. cit.} for a detailed study of these heights. More generally, if $h$ is an arbitrary semi-positive metric of finite energy, we define $h_{\ov{\aL}}(\pX)$ by equation \eqref{eq:13}. By the properties of the energy functional \cite[Cor. 4.2 and Rmk. 4.6]{Berman-Boucksom}, one proves with ease that $h_{\ov{\aL}}(\pX)$ is intrinsically defined, namely it only depends on $h$ and not on the smooth reference metric $h_0$. Observe however that, while $h_{\ov{\aL}}(\pX)$ can be defined, the height of a cycle in $\pX$ is in general meaningless. 

\begin{remark}\label{remark:height-extension}
Actually, formula \eqref{eq:13} and the monotonicity properties of the energy functional $\pE$ show that the height can be extended to singular semi-positive hermitian line bundles $\ov{\aL}$, just by declaring
\begin{displaymath}
	\begin{split}
		h_{\ov{\aL}_{\phi}}(\pX):&=\inf_{\psi\geq\phi}h_{\ov{\aL}_{\psi}}(\pX)\\
		&=h_{\ov{\aL}_{0}}(\pX)+\frac{1}{2}d(\deg\aL_{\CC})\inf_{\psi\geq\phi}\pE(\psi)\in\RR\cup\lbrace-\infty\rbrace.
	\end{split}
\end{displaymath}
Here the inf runs over the bounded $\omega_{0}$-psh functions, invariant under the action of complex conjugation. With this definition, the assignment $\phi\mapsto h_{\ov{\aL}_{\phi}}(\pX)$ is non-decreasing and continuous along point-wise decreasing sequences of $\omega_{0}$-psh functions, invariant under complex conjugation. Furthermore, this extension is uniquely determined by these properties. Therefore, the class of semi-positive hermitian metrics of finite energy is the biggest one for which the height can be defined and is a real number. 
\end{remark}

\subsection{Arithmetic Hilbert-Samuel theorems} Our first aim is to prove the following arithmetic analogue of the Hilbert-Samuel theorem in adjoint form.
\begin{theorem}\label{thm:HS}
Let $\pX\rightarrow\Spec\Int$ be an arithmetic variety of Krull dimension $d$ and $\ov{\aL}=(\aL,h)$ a semi-positive hermitian line bundle of finite energy. Assume $\aL_{\QQ}$ is semi-ample and big, and that $\aL$ is nef on vertical fibers. Let also $\aK$ be an invertible sheaf such that $\aK_{\QQ}=K_{\pX_{\QQ}}$. Then the following asymptotic expansion holds:
\begin{equation}\label{eq:14}
	\adeg H^{0}(\pX,\aL^{\otimes k}\otimes\aK)_{L^{2}}=h_{\ov{\aL}}(\pX)\frac{k^{d}}{d!}+o(k^{d})\text{ as }k\to+\infty.
\end{equation}
\end{theorem}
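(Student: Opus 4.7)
The plan is to reduce Theorem \ref{thm:HS} to the classical smooth arithmetic Hilbert--Samuel theorem, using the splitting identity \eqref{eq:16} to isolate a purely analytic correction term and then using Theorem \ref{thm:asymptotics-L-functional} to control that term asymptotically.

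First, unwinding the definition \eqref{eq:15} of the functional $\pL_{k}^{\ari}$, the quantity to estimate is
\begin{displaymath}
	\adeg H^{0}(\pX,\aL^{\otimes k}\otimes\aK)_{L^{2}}=\frac{kN_{k}}{2}\pL_{k}^{\ari}(\phi).
\end{displaymath}
Applying \eqref{eq:16} with $\phi^{\prime}=\phi_{0}=0$ (the smooth reference metric $h_{0}$) decomposes this as
\begin{displaymath}
	\adeg H^{0}(\pX,\aL^{\otimes k}\otimes\aK)_{L^{2}}=\adeg H^{0}(\pX,\aL_{0}^{\otimes k}\otimes\aK)_{L^{2}}+\frac{kN_{k}}{2}\pL_{k}(\phi),
\end{displaymath}
where the left-hand side is arithmetic in nature while the second term on the right is purely analytic.

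For the smooth term, the arithmetic Hilbert--Samuel theorem in the generality of semi-ample-and-big line bundles on the generic fiber with nef extension on vertical fibers is already known: it is covered by the theorems of Zhang \cite{Zhang} and Moriwaki \cite{Moriwaki}, and yields
\begin{displaymath}
	\adeg H^{0}(\pX,\aL_{0}^{\otimes k}\otimes\aK)_{L^{2}}=h_{\ov{\aL}_{0}}(\pX)\frac{k^{d}}{d!}+o(k^{d}).
\end{displaymath}
For the analytic term, Theorem \ref{thm:asymptotics-L-functional} provides the key convergence $\pL_{k}(\phi)\to\pE(\phi)$ as $k\to\infty$. Combined with the Riemann--Roch estimate $N_{k}=\frac{k^{d-1}}{(d-1)!}\deg\aL_{\CC}+o(k^{d-1})$ (which uses $\aL_{\QQ}$ nef and big, together with Kawamata--Viehweg vanishing as already recalled in the text), this multiplies out to
\begin{displaymath}
	\frac{kN_{k}}{2}\pL_{k}(\phi)=\frac{d(\deg\aL_{\CC})\pE(\phi)}{2}\cdot\frac{k^{d}}{d!}+o(k^{d}).
\end{displaymath}

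Summing the two contributions and invoking the defining identity \eqref{eq:13} of the height, namely
\begin{displaymath}
	h_{\ov{\aL}_{0}}(\pX)+\frac{d(\deg\aL_{\CC})\pE(\phi)}{2}=h_{\ov{\aL}}(\pX),
\end{displaymath}
produces exactly \eqref{eq:14}. The main obstacle is really \emph{not} in this final assembly but in the ingredient it consumes: the convergence $\pL_{k}(\phi)\to\pE(\phi)$ for \emph{arbitrary} $\phi$ of finite energy, which is Theorem \ref{thm:asymptotics-L-functional}. Once that is granted, one has only to check that the $o(1)$ error in $\pL_{k}(\phi)-\pE(\phi)$, multiplied by the $O(k^{d})$ factor $kN_{k}/2$, remains $o(k^{d})$, which is automatic. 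Thus the entire analytic effort of sections \ref{sec:pluripotential}--\ref{sec:geodesics} is precisely what bridges the gap between the classical smooth arithmetic Hilbert--Samuel statement and its extension to semi-positive metrics of finite energy.
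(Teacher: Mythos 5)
Your first half is exactly the paper's own reduction (Lemma \ref{lemma:reduction-smooth}): combining \eqref{eq:15}, \eqref{eq:16} with $\phi'=0$, the Riemann--Roch estimate for $N_k$, Theorem \ref{thm:asymptotics-L-functional}, and the defining identity \eqref{eq:13} does correctly reduce \eqref{eq:14} to the case of the smooth reference metric $h_0$. That part is fine, and your observation that the whole analytic apparatus of sections \ref{sec:pluripotential}--\ref{sec:geodesics} is consumed at this single point is accurate.

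The gap is in the other half: you dismiss the smooth case as ``already known'' by Zhang and Moriwaki, but in the hypotheses at hand it is not covered verbatim by either, and the paper in fact devotes the bulk of its proof of Theorem \ref{thm:HS} to establishing precisely this case. Zhang's arithmetic Hilbert--Samuel theorem assumes $\aL_{\QQ}$ ample, whereas here $\aL_{\QQ}$ is only semi-ample and big; one also needs the statement in adjoint form $\aL^{\otimes k}\otimes\aK$ and for the $L^2$ covolume, while Moriwaki's (and Yuan's) generalizations concern arithmetic volumes, i.e.\ counts of small sections, and identify the leading term with $\widehat{\vol}$ rather than with the height. The paper closes this by \emph{adapting} Zhang's argument rather than citing it: fix an auxiliary very ample $\aM$ with a positively curved smooth metric; use finite generation of $\bigoplus_{n,m}H^{0}(\pX_{\QQ},\aL_{\QQ}^{\otimes n}\otimes\aM_{\QQ}^{\otimes m})$ to produce sets of integral sections of maximal rank with controlled norms (Zhang's Lemma (1.6)); use Kodaira's lemma to get a nonzero $t\in H^{0}(\pX_{\QQ},\aL_{\QQ}^{\otimes n}\otimes\aM_{\QQ}^{-1})$; sandwich $H^{0}(\pX,\aL^{\otimes(kj+i)}\otimes\aK)$ between $H^{0}(\pX,\aL^{\otimes((k-n)j+i)}\otimes\aM^{\otimes j}\otimes\aK)$ and $H^{0}(\pX,\aL^{\otimes(kj+i)}\otimes\aM^{\otimes j}\otimes\aK)$ via multiplication by $t^{j}$ and $s^{j}$; control the resulting changes of covolume by Lemma \ref{lemma:minima}; and apply the classical arithmetic Riemann--Roch/Hilbert--Samuel theorem to the genuinely positive bundles so obtained, concluding by a diagonal argument in $j$ and $k$. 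Without this construction (or a reference whose hypotheses genuinely match semi-ample-and-big generic fiber, vertical nefness, semi-positive smooth metric, adjoint twist, and $L^2$ normalization), your proof is missing its second essential ingredient.
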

The proof of this statement reduces to the case of the smooth semi-positive hermitian metric $h_{0}$.
\begin{lemma}\label{lemma:reduction-smooth}
The asymptotic expansion \eqref{eq:14} holds for any semi-positive metric of finite energy as soon as it holds for some smooth semi-positive hermitian metric, for instance the reference metric $h_{0}$.
\end{lemma}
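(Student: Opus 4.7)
The plan is to write the arithmetic degree at the finite-energy metric $h=h_{\phi}$ as the arithmetic degree at the smooth reference metric $h_{0}$ plus a correction term whose asymptotic behavior is governed by the analytic functional $\pL_{k}(\phi)$, and then to invoke Theorem \ref{thm:asymptotics-L-functional} to extract the $k^{d}$ coefficient.

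First, I would specialize \eqref{eq:16} to $\phi^{\prime}=0$, noting that $\pL_{k}(0)=0$ because the basis $(s_{j}^{(k)})$ in \eqref{eq:2} is orthonormal for $h_{0}$. Combined with the definition \eqref{eq:15}, this yields, after multiplying by $kN_{k}/2$, the identity
\begin{equation*}
\adeg H^{0}(\pX,\aL^{\otimes k}\otimes\aK)_{L^{2},h_{\phi}}=\adeg H^{0}(\pX,\aL^{\otimes k}\otimes\aK)_{L^{2},h_{0}}+\frac{kN_{k}}{2}\pL_{k}(\phi).
\end{equation*}

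Next, I would estimate the correction. Theorem \ref{thm:asymptotics-L-functional} gives $\pL_{k}(\phi)=\pE(\phi)+o(1)$ as $k\to\infty$, while the Kawamata-Viehweg-Riemann-Roch estimate displayed immediately after \eqref{eq:15} gives $N_{k}=\frac{k^{d-1}}{(d-1)!}\deg\aL_{\CC}+o(k^{d-1})$. Since $kN_{k}$ is of order $k^{d}$ and $\pE(\phi)$ is a fixed finite real number, the product expands as
\begin{equation*}
\frac{kN_{k}}{2}\pL_{k}(\phi)=\frac{1}{2}d(\deg\aL_{\CC})\pE(\phi)\cdot\frac{k^{d}}{d!}+o(k^{d}).
\end{equation*}

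Finally, I would invoke the hypothesis of the lemma for $h_{0}$, so that the first summand above equals $h_{\ov{\aL}_{0}}(\pX)k^{d}/d!+o(k^{d})$, and combine the two leading coefficients. By the defining relation \eqref{eq:13}, their sum is exactly
\begin{equation*}
h_{\ov{\aL}_{0}}(\pX)+\frac{1}{2}d(\deg\aL_{\CC})\pE(\phi)=h_{\ov{\aL}}(\pX),
\end{equation*}
yielding \eqref{eq:14} for the metric $h_{\phi}$.

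The main obstacle is not any step in this bookkeeping but rather the availability of the analytic input, Theorem \ref{thm:asymptotics-L-functional}, in the generality of finite-energy metrics; once this is granted, the reduction is essentially Riemann-Roch plus the compatibility relation \eqref{eq:16}. The only mildly delicate point is that the $o(1)$ deviation of $\pL_{k}(\phi)$ from $\pE(\phi)$ is amplified by the factor $kN_{k}=O(k^{d})$, but this still produces an $o(k^{d})$ remainder, so the argument closes cleanly.
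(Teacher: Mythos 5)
Your proposal is correct and is exactly the argument the paper intends: its own proof is a two-sentence sketch pointing to \eqref{eq:16}, \eqref{eq:13} and Theorem \ref{thm:asymptotics-L-functional}, and your computation (specializing \eqref{eq:16} to $\phi'=0$, multiplying by $kN_{k}/2$, and matching the leading coefficient via \eqref{eq:13}) is just the worked-out version of that sketch. The bookkeeping, including the observation that the $o(1)$ error in $\pL_{k}(\phi)-\pE(\phi)$ is absorbed into $o(k^{d})$ after multiplication by $kN_{k}=O(k^{d})$, is accurate.
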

\begin{proof}
The expressions \eqref{eq:16} and \eqref{eq:13} show that the difference between the finite energy and the smooth cases is expressed solely in terms of $\pL_{k}(\phi)$ and $\pE(\phi)$. These are already controlled by Theorem \ref{thm:asymptotics-L-functional}. 
\end{proof}

By the lemma we can reduce to the case of the smooth semi-positive metric $h_0$. The proof is an adaptation of the argument of \cite[Thm. 1.4]{Zhang}. We will give the main lines of the proof. We start by recalling a lemma that will also be useful for Theorem \ref{thm:HS_cor} below.

\begin{lemma}\label{lemma:minima}
Let $M'\subset M'_{\RR}$ and $M\subset M_{\RR}$ be lattices in finite dimensional real vector spaces of dimensions $d'$, $d$, respectively. Assume $M'_{\RR}\subseteq M_{\RR}$ and $M'\subseteq M$. Let $\|\cdot\|$ be a euclidean metric on $M_{\RR}$, and use the same notation for its restriction to $M'_{\RR}$. Then
\begin{displaymath}
	\adeg (M,\|\cdot\|)-\adeg(M',\|\cdot\|)\geq-\log(d!)-(d-d')\log(\frac{1}{2}\lambda_{d}(M)).
\end{displaymath}
Here $\lambda_{d}(M)$ is defined as
\begin{displaymath}
	\lambda_{d}(M)=\inf\lbrace \sup\lbrace\|e_{1}\|,\ldots,\|e_{d}\|\rbrace\mid e_{1},\ldots e_{d}\in M \text{ independent}\rbrace.
\end{displaymath}
\end{lemma}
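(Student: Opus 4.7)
The plan is to bound $\vol(M_\RR/M)$ from above in terms of $\vol(M'_\RR/M')$ and short independent lattice vectors in $M$, using a Gram--Schmidt extension of a basis of $M'$. By the very definition of $\lambda_d(M)$, for any $\epsilon>0$ there exist linearly independent vectors $e_1,\ldots,e_d\in M$ with $\max_i\|e_i\|\leq \lambda_d(M)+\epsilon$. Fix also a basis $f_1,\ldots,f_{d'}$ of $M'$.

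Since $f_1,\ldots,f_{d'}$ span the $d'$-dimensional subspace $M'_\RR\subset M_\RR$ while $e_1,\ldots,e_d$ span all of $M_\RR$, one can extract $d-d'$ of the $e_i$'s, say $e_{i_1},\ldots,e_{i_{d-d'}}$, so that the concatenated list $f_1,\ldots,f_{d'},e_{i_1},\ldots,e_{i_{d-d'}}$ is linearly independent in $M_\RR$. It generates a full-rank sublattice $\Lambda\subseteq M$ containing $M'$. Applying Gram--Schmidt orthogonalisation in the order listed, the covolume factors as
\[
\vol(M_\RR/\Lambda) \;=\; \Bigl(\prod_{j=1}^{d'}\|f_j^\ast\|\Bigr)\cdot\Bigl(\prod_{j=1}^{d-d'}\|e_{i_j}^\ast\|\Bigr) \;=\; \vol(M'_\RR/M')\cdot\prod_{j=1}^{d-d'}\|e_{i_j}^\ast\|,
\]
where the first factor equals $\vol(M'_\RR/M')$ because $f_1,\ldots,f_{d'}$ is a basis of $M'$ already spanning $M'_\RR$, and $\|e_{i_j}^\ast\|\leq\|e_{i_j}\|\leq\lambda_d(M)+\epsilon$ because orthogonal projection does not increase norms.

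Combining with $\vol(M_\RR/\Lambda)\geq \vol(M_\RR/M)$ (which holds since $\Lambda\subseteq M$), taking logarithms, and letting $\epsilon\to 0$ yields the sharp inequality
\[
\adeg(M,\|\cdot\|)-\adeg(M',\|\cdot\|)\;\geq\; -(d-d')\log\lambda_d(M).
\]
The stated inequality then follows by rewriting $-(d-d')\log\lambda_d(M)=-(d-d')\log(\tfrac{1}{2}\lambda_d(M))-(d-d')\log 2$ and invoking the elementary bound $d!\geq 2^{d-d'}$, which in the regime where the lemma is applied (namely $d'\geq 1$) follows from the induction $d!\geq 2^{d-1}$. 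There is no serious technical obstacle: the argument is elementary linear algebra, and the only points requiring mild care are the existence of a basis completion of $M'$ by vectors from $\{e_i\}$ (guaranteed since the $e_i$ span $M_\RR$) and the standard fact that Gram--Schmidt along the first $d'$ vectors gives the covolume of $M'$ inside $M'_\RR$.
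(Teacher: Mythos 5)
Your argument is correct, and it is necessarily a different route from the paper's, because the paper gives no proof at all: it simply quotes Zhang's Lemma (1.7) from \emph{Positive line bundles on arithmetic varieties}. Every step of your chain checks out: the Steinitz exchange producing a full-rank sublattice $\Lambda\subseteq M$ containing $M'$ with $M'$-basis vectors listed first, the Gram--Schmidt factorisation $\vol(M_{\RR}/\Lambda)=\vol(M'_{\RR}/M')\prod_j\|e_{i_j}^{\ast}\|$, the bound $\|e_{i_j}^{\ast}\|\leq\|e_{i_j}\|\leq\lambda_d(M)+\epsilon$, and the comparison $\vol(M_{\RR}/\Lambda)=[M:\Lambda]\,\vol(M_{\RR}/M)\geq\vol(M_{\RR}/M)$. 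What your approach buys is a cleaner and strictly sharper estimate, $\adeg(M)-\adeg(M')\geq-(d-d')\log\lambda_d(M)$, with no $\log(d!)$ and no factor $\tfrac12$; those constants in the stated form are artifacts of the successive-minima/Minkowski's-second-theorem normalisation in Zhang's formulation, and you recover them via the trivial bound $2^{d-d'}\leq d!$.

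The one point to flag is the final reduction: $2^{d-d'}\leq d!$ genuinely requires $d'\geq 1$, as you note. This is not a defect of your argument but of the lemma's statement in full generality: for $d'=0$, $d=1$, $M=\Int v$, the stated inequality reads $-\log\|v\|\geq-\log(\|v\|/2)$, which is false, so the hypothesis $d'\geq 1$ is implicitly needed anyway (and holds in every application in the paper, where $M'$ is the image of a nonzero space of sections). It would be worth stating that hypothesis explicitly rather than burying it in the phrase ``in the regime where the lemma is applied.''
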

\begin{proof}
This is exactly \cite[Lemma (1.7)]{Zhang}.
\end{proof}
\begin{proof}[Proof of Theorem \ref{thm:HS}]
By Lemma \ref{lemma:reduction-smooth} we are led to treat the case of a smooth semi-positive metric $h_0$.

We fix a very ample line bundle $\aM$ on $\pX$, and we equip it with a smooth hermitian metric with strictly positive curvature form. The semi-ampleness of $\aL_{\QQ}$ and $\aM_{\QQ}$ guarantees that the graded algebra
\begin{displaymath}
 	S:=\bigoplus_{n,m\geq 0}H^{0}(\pX_{\QQ},\aL_{\QQ}^{\otimes n}\otimes\aM_{\QQ}^{\otimes m})
\end{displaymath}
is of finite type over $\QQ$ and
\begin{displaymath}
 \bigoplus_{n,m\geq 0}H^{0}(\pX_{\QQ},\aL_{\QQ}^{\otimes n}\otimes\aM_{\QQ}^{\otimes m}\otimes\aK_{\QQ})
\end{displaymath}
is a finite graded $S$-module. This implies, as in \cite[Lemma (1.6)]{Zhang}, that there exists a real constant $c>1$ such that every $\Int$-module  $H^{0}(\pX,\aL^{\otimes n}\otimes\aM^{\otimes m}\otimes\aK)$ contains a set of sections of maximal rank with $L^{\infty}$ and hence $L^2$ norm bounded by $c^{1+\max(n,m)}$.

We fix a non-vanishing global section $s$ of $\aM$ with sup norm bounded by a positive constant $c'$. The line bundle $\aL_{\QQ}$ is big by assumption. Then, Kodaira's lemma \cite[Prop. 2.2.6]{Lazarsfeld:I} states there exists $n\geq 1$ such that
\begin{displaymath}
 	H^{0}(\pX_{\QQ},\aL_{\QQ}^{\otimes n}\otimes\aM_{\QQ}^{-1})\neq 0.
\end{displaymath}
Let $t$ be such a non-vanishing global section, whose sup norm is bounded by a constant $c''$. 

Let $k>n$ and $j$ be any positive integers, and $i$ an integer between 0 and $k-1$. We consider $H^{0}(\pX,\aL^{\otimes (kj+i)}\otimes\aK)$. Tensoring with $s^{j}$ defines a monomorphism
\begin{displaymath}
	\begin{split}
		\alpha:M_{1}=H^{0}(\pX,\aL^{\otimes (kj+i)}&\otimes\aK)\\
		&\hookrightarrow M_{2}=H^{0}(\pX,\aL^{\otimes (kj+i)}\otimes\aM^{\otimes j}\otimes\aK),
	\end{split}
\end{displaymath}
whose norm is bounded by $c^{\prime j}$. Similarly, multiplication by $t^{j}$ gives a monomorphism
\begin{displaymath}
	\begin{split}
	\beta:M_{3}=H^{0}(\pX,\aL^{\otimes ((k-n)j+i)}\otimes\aM^{\otimes j}&\otimes\aK)\\
	&\hookrightarrow M_{1}=H^{0}(\pX,\aL^{\otimes (kj+i)}\otimes\aK),
	\end{split}
\end{displaymath}
whose norm is bounded by $c^{\prime\prime j}$. Applying Lemma \ref{lemma:minima}, we obtain inequalities
\begin{equation}\label{eq:60}
	\begin{split}
		\adeg \ov{M}_{2,L^{2}}-\adeg\ov{M}_{1,L^{2}}\geq & -\log((\rk M_{2})!)\\
		&\hspace{0.3cm}-(\rk M_{2}-\rk M_{1})(1+k(j+1))\log(c/2)\\
		&\hspace{0.6cm}-j(\rk M_{1})\log(c')
	\end{split}
\end{equation}
 and
 \begin{equation}\label{eq:61}
 	\begin{split}
		\adeg \ov{M}_{1,L^{2}}-\adeg\ov{M}_{3,L^{2}}\geq & -\log((\rk M_{1})!)\\
		&\hspace{0.3cm}-(\rk M_{1}-\rk M_{3})(1+k(j+1))\log(c/2)\\
		&\hspace{0.6cm}-j(\rk M_{3})\log(c'').
	\end{split}
 \end{equation}
 By the Riemann-Roch theorem and the arithmetic Riemann-Roch theorem for smooth positive hermitian line bundles, we derive the asymptotics
 \begin{equation}\label{eq:62}
 	\rk M_{l}=\frac{(jk+i)^{d-1}}{(d-1)!}+O(k^{d-1}j^{d-1})+o_{k}(j^{d-1}),\quad l=1,2,3,
\end{equation}
\begin{equation}\label{eq:63}
	\adeg\ov{M}_{l,L^{2}}=\frac{(jk+i)^{d}}{d!}h_{\ov{\aL}}(\pX)+O(k^{d-1}j^{d})+o_{k}(j^{d}),\quad l=2,3.
 \end{equation}
 The notation $o_{k}$ represents a little $o$ quantity depending on $k$. 
 
 To conclude, let $\varepsilon>0$ and fix $k>n$ such that $k^{d-1}j^{d}<\varepsilon k^{d}j^{d}$ for all $j\geq 1$. For $j$ sufficiently big, and since $k$ has being fixed, $o_{k}(j^{d})$ will be bounded by $\varepsilon k^{d}j^{d}$. Then, from \eqref{eq:60}--\eqref{eq:63} we see that
 \begin{displaymath}
 	\left|\adeg\ov{M}_{1,L^{2}}-\frac{(kj+i)^{d}}{d!}h_{\ov{\aL}}(\pX)\right| \leq\kappa_{0}\varepsilon (kj+i)^{d},
 \end{displaymath}
 where $\kappa_{0}$ is a positive constant independent of $i,j$. The proof is complete.
\end{proof}
For log-singular hermitian line bundles, which are in particular of finite energy, we have a more general result.
\begin{theorem}\label{thm:HS_cor}
Let $\pX\rightarrow\Spec\Int$ be an arithmetic variety of Krull dimension $d$ and $D\subset\pX_{\QQ}$ a divisor with normal crossings. Let $\ov{\aL}$ be a semi-positive log-singular hermitian line bundle, with singularities along $D(\CC)$. Assume $\aL_{\QQ}$ is semi-ample and big, and that $\aL$ is nef on vertical fibers. Let also $\ov{\aN}$ be an arbitrary log-singular hermitian line bundle with singularities along $D(\CC)$. Then there is an asymptotic expansion
\begin{displaymath}
 	\adeg H^{0}(\pX,\aL^{\otimes k}\otimes\aN)_{L^2}=h_{\ov{\aL}}(\pX)\frac{k^{d}}{d!}+o(k^{d})\text{ as }k\to+\infty,
\end{displaymath}
where the $L^{2}$ norms are computed with respect to any smooth volume form $\mu$ on $\pX(\CC)$, invariant under the action of complex conjugation.
\end{theorem}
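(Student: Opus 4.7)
The plan is to deduce Theorem \ref{thm:HS_cor} from the adjoint-form Theorem \ref{thm:HS} by a Kodaira-type sandwich that swaps the line-bundle twist $\aK$ for $\aN$. First I would observe that the asymptotic is insensitive, modulo $o(k^{d})$, to the choices of the smooth $F_{\infty}$-invariant volume form $\mu$ and of the particular log-singular metric on $\aN_{\CC}$ (with singularities along $D(\CC)$): any two such choices modify the $L^{2}$ norm on $H^{0}(\pX,\aL^{\otimes k}\otimes\aN)$ by an essentially bounded multiplicative factor, changing $\adeg$ by $O(N_{k}) = O(k^{d-1}) = o(k^{d})$. Fix once and for all an integral model $\aK$ of $K_{\pX_{\QQ}}$ with a smooth $F_{\infty}$-invariant metric; Theorem \ref{thm:HS} applied to the pair $(\aL,\aK)$ gives the asymptotic
\begin{displaymath}
\adeg H^{0}(\pX,\aL^{\otimes k}\otimes\aK)_{L^{2}} = h_{\ov{\aL}}(\pX)\frac{k^{d}}{d!} + o(k^{d}),
\end{displaymath}
which will be the reference for the sandwich.

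Next, pick a very ample line bundle $\aM$ on $\pX$ equipped with a smooth hermitian metric of strictly positive curvature. By Kodaira's lemma applied to $\aN\otimes\aK^{-1}\otimes\aM^{\otimes m_{0}}$ and $\aK\otimes\aN^{-1}\otimes\aM^{\otimes m_{0}}$ (both big for $m_{0}\gg 0$), there exist non-zero global sections
\begin{displaymath}
a\in H^{0}(\pX,\aN\otimes\aK^{-1}\otimes\aM^{\otimes m_{0}}),\qquad b\in H^{0}(\pX,\aK\otimes\aN^{-1}\otimes\aM^{\otimes m_{0}}).
\end{displaymath}
By the bigness of $\aL_{\QQ}$ there exist $n\geq 1$ and a non-zero $t\in H^{0}(\pX,\aL^{\otimes n}\otimes\aM^{-1})$. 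Multiplication by $bt^{m_{0}}$ and $at^{m_{0}}$ yields $\Int$-linear monomorphisms
\begin{displaymath}
\alpha_{k}\colon H^{0}(\pX,\aL^{\otimes k}\otimes\aN)\hookrightarrow H^{0}(\pX,\aL^{\otimes(k+nm_{0})}\otimes\aK),
\end{displaymath}
\begin{displaymath}
\beta_{k}\colon H^{0}(\pX,\aL^{\otimes k}\otimes\aK)\hookrightarrow H^{0}(\pX,\aL^{\otimes(k+nm_{0})}\otimes\aN).
\end{displaymath}
Applying Lemma \ref{lemma:minima} to $\alpha_{k}$ (resp. $\beta_{k}$), with the target equipped with its natural canonical (resp. $\mu$-$L^{2}$) structure, and controlling the successive minima $\lambda_{d}$ of the target lattices by Zhang's argument as in the proof of Theorem \ref{thm:HS}, yields one-sided inequalities with error $o(k^{d})$ (after noting, by Riemann-Roch, that the rank differences are $O(k^{d-2})$). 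Combined with the asymptotic for the $\aK$-case at the shifted weight $k+nm_{0}$, these inequalities sandwich $\adeg H^{0}(\pX,\aL^{\otimes k}\otimes\aN)_{L^{2},\mu}$ between $\tfrac{k^{d}}{d!}h_{\ov{\aL}}(\pX)\pm o(k^{d})$.

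The main technical obstacle is comparing the two $L^{2}$ structures on $H^{0}(\pX,\aL^{\otimes k}\otimes\aN)$: the natural $\mu$-$L^{2}$ norm on the source of $\alpha_{k}$, and the norm pulled back from the canonical $L^{2}$ norm on its target (and similarly for $\beta_{k}$). The ratio of the corresponding inner products is a fixed, $k$-independent function on $\pX(\CC)$, essentially $|bt^{m_{0}}|^{2}$ computed in the product of metrics $h_{\aL}^{\otimes nm_{0}}\otimes h_{\aK}\otimes h_{\aN}^{-1}$ divided by $\mu$. Showing that this function is essentially bounded above and below by positive constants (so that the two $L^{2}$ structures are uniformly equivalent with $k$-independent constants) is the delicate step: one must carefully analyze the log-singular behavior of the metrics $h_{\aL}$ and $h_{\aN}$ along the common divisor $D(\CC)$ and arrange for compensation with the algebraic zeros of $b$ and $t$ along $D$, using the flexibility in the choice of $h_{\aN}$ and $\mu$ afforded by the first paragraph and the BKK formalism. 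Once the comparison is established, the sandwich closes and the asymptotic follows.
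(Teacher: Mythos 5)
Your overall architecture -- a two-sided sandwich between $H^{0}(\pX,\aL^{\otimes k}\otimes\aN)$ and adjoint-form spaces via multiplication maps, controlled by Lemma \ref{lemma:minima} and closed by Theorem \ref{thm:HS} -- is exactly the paper's strategy. But the two places where the log-singularity actually has to be confronted are left open, and one preliminary claim is false. First, the assertion that changing the log-singular metric on $\aN$ alters the $L^{2}$ norms by an ``essentially bounded multiplicative factor'' is wrong: the ratio of two log-singular metrics with singularities along $D(\CC)$ is in general unbounded (it can grow like powers of $\log|z_{i}|^{-1}$), so the $O(N_{k})$ comparison of $\adeg$ does not follow from pointwise boundedness. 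Second, and this is the crux, the norms of your multiplication maps $\alpha_{k}$, $\beta_{k}$ are \emph{not} finite as you have set them up: the sections $a$, $b$, $t$ produced by Kodaira's lemma have no reason to vanish along $D$, so the pointwise norm of $bt^{m_{0}}$ in the product of log-singular metrics blows up along $D(\CC)$ and $\|\alpha_{k}\|=\infty$. You correctly identify this as ``the delicate step'' but only gesture at a resolution. The paper's fix is concrete and essential: using bigness of $\aL_{\QQ}$, choose the comparison sections in $H^{0}(\pX_{\QQ},\aL_{\QQ}^{\otimes k_{0}}\otimes\aK_{\QQ}\otimes\aN_{\QQ}^{-1}(-D))$ and $H^{0}(\pX_{\QQ},\aL_{\QQ}^{\otimes k_{1}}\otimes\aK_{\QQ}^{-1}\otimes\aN_{\QQ}(-D))$, i.e.\ force holomorphic vanishing along $D$; this vanishing cancels the logarithmic singularities of the metrics, so the pointwise norms of $t_{0},t_{1}$ extend continuously (indeed vanish) on $D(\CC)$ and the sup norms are finite. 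This is also precisely where the hypothesis that $D$ be defined over $\QQ$ enters.

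Third, your appeal to ``Zhang's argument as in the proof of Theorem \ref{thm:HS}'' to control the successive minima $\lambda_{d}$ of the target lattices does not go through as stated: those targets carry $L^{2}$ structures built from the log-singular metrics $h_{\aL}^{\otimes k}$ (and $h_{\aN}$), so Zhang's Lemma (1.6), which presupposes smooth metrics and yields bounds of the form $c^{k}$, is not applicable. The paper proves a substitute (Lemma \ref{lemma:bounded_norms}): integrating powers of functions with logarithmic growth produces independent sections of maximal rank with $L^{2}$ norm bounded by $C^{k+1}((k+1)N)!^{(d-1)/2}$, whose logarithm is $O(k\log k)$ rather than $O(k)$; this is still enough because the resulting error term $O(k^{d-1}\log k)$ is $o(k^{d})$. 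Without these two ingredients -- the $(-D)$ twist and the factorial-type minima bound -- the sandwich does not close.
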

The reader is referred to \cite[Sec. 7]{BKK1} and \cite[Def. 3.29]{BKK2} for the definition and first properties of log-singular line bundles. For a more detailed study of this and related notions in arithmetic intersection theory, the reader can consult \cite{GFM}.

The first step in the direction of the proof of the theorem is the following lemma, which is an analogue of \cite[Lemma (1.6)]{Zhang} for log-singular hermitian line bundles.

\begin{lemma}\label{lemma:bounded_norms}
Let $\pX$ be an arithmetic variety of Krull dimension $d$ and $D\subset\pX(\CC)$ a divisor with normal crossings. Let $\ov{\aL}$ be a log-singular hermitian line bundle over $\pX$, with singularities along $D$, which is semi-ample on the generic fiber. Let $\ov{\aM}$ be any log-singular hermitian line bundle on $\pX$, with singularities along $D$ as well. Fix a smooth volume form $\mu$, invariant under the action of complex conjugation, with respect to which we compute $L^{2}$ norms. Then there exists a real positive constant $C>0$ and an integer $N\geq 1$ such that for every $k\geq 0$, the $\Int$-module $H^{0}(\pX,\aL^{\otimes k}\otimes\aM)$ contains a set of independent sections of maximal rank whose $L^2$ norm is bounded by $C^{k+1}((k+1)N)!^{(d -1)/2}$.
\end{lemma}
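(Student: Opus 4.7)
The plan is to follow the strategy of Zhang's Lemma (1.6) cited in the proof of Theorem \ref{thm:HS}, with the key modification that log-singular metrics give no finite sup norms on sections, so the pointwise $L^{\infty}$ bounds of the smooth case must be replaced by $L^{2}$ bounds obtained from the explicit local model of log-singular metrics.

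First, by semi-ampleness of $\aL_{\QQ}$, the graded $\QQ$-algebra $R = \bigoplus_{k \geq 0} H^{0}(\pX_{\QQ}, \aL_{\QQ}^{\otimes k})$ is finitely generated, and $W = \bigoplus_{k \geq 0} H^{0}(\pX_{\QQ}, \aL_{\QQ}^{\otimes k} \otimes \aM_{\QQ})$ is a finitely generated $R$-module. I would choose integral generators $s_{1}, \ldots, s_{p}$ of $R$ with $s_{i} \in H^{0}(\pX, \aL^{\otimes a_{i}})$ and integral generators $t_{1}, \ldots, t_{q}$ of $W$ with $t_{j} \in H^{0}(\pX, \aL^{\otimes b_{j}} \otimes \aM)$, letting $N$ be an upper bound for all $a_{i}, b_{j}$. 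Then the monomials $\sigma_{\underline{\alpha},j} := s_{1}^{\alpha_{1}} \cdots s_{p}^{\alpha_{p}} \cdot t_{j}$ with $\sum_{i} a_{i}\alpha_{i} + b_{j} = k$ are integral global sections of $\aL^{\otimes k} \otimes \aM$, and they $\QQ$-span $H^{0}(\pX_{\QQ}, \aL_{\QQ}^{\otimes k} \otimes \aM_{\QQ})$. Selecting $N_{k} := \rk H^{0}(\pX, \aL^{\otimes k} \otimes \aM)$ of these monomials that are $\QQ$-linearly independent produces an independent set of maximal rank inside the $\Int$-module.

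The main estimate is then the $L^{2}$ bound on each such monomial. From the local description of log-singular metrics \cite[Sec. 7]{BKK1}, \cite[Def. 3.29]{BKK2} and the compactness of $\pX(\CC)$, I expect to obtain a finite cover of $\pX(\CC)$ by coordinate polydisks on which $D$ is cut out by $z_{1} \cdots z_{r} = 0$ with $r \leq d-1$ (since $\pX(\CC)$ has complex dimension $d-1$), together with uniform constants $A, B > 0$ such that
\begin{displaymath}
|s_{i}|^{2}_{h^{a_{i}}}(z), \; |t_{j}|^{2}_{h^{b_{j}} \otimes h_{M}}(z) \leq A \prod_{\ell = 1}^{r} \bigl(-\log|z_{\ell}|\bigr)^{B}
\end{displaymath}
for every generator. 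A monomial of total degree $k$ then satisfies $|\sigma_{\underline{\alpha},j}|^{2}_{h^{k} \otimes h_{M}} \leq A^{k+1} \prod_{\ell}(-\log|z_{\ell}|)^{B(k+1)}$ on each chart. Integration against $\mu$ splits along the $r$ transverse directions into one-variable integrals of the form $\int_{0}^{1/2} r (-\log r)^{P} dr$, each bounded by a multiple of $P!/2^{P+1}$; this yields a squared $L^{2}$ norm at most $A'^{k+1} \bigl((k+1) N\bigr)!^{d-1}$ for suitable $A'$ and after enlarging $N$. Taking square roots and adjusting $C$ delivers the announced bound.

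The main obstacle will be establishing the uniform local estimate above on all generators and all charts simultaneously, with a single exponent $B$. This requires unpacking the BKK definitions to extract the explicit $(-\log|z|)^{B}$ growth of the pointwise squared norms of the generators, which is delicate because the $s_{i}$ and $t_{j}$ live in line bundles of different powers and $\aM$ may contribute an additional singularity; the finiteness of the cover and of the set of generators is what ultimately allows one to take a common $B$. Once this uniform bound is in place, the finite generation step, the extraction of a $\QQ$-linearly independent subset of monomials, and the Gamma-type integral computation are essentially mechanical.
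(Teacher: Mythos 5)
Your proposal is correct and follows essentially the same route as the paper: finite generation of the section ring and module, reduction to bounding monomials in integral generators, local $(-\log|z|)^{B}$ bounds on a finite polydisk cover of $D$ (the paper organizes this by factoring each log-singular metric as a smooth metric times a function of logarithmic growth, which isolates the singular contribution exactly as your uniform estimate does), and the Gamma-type integral in polar coordinates giving the factorial bound. The uniformity issue you flag is resolved in the paper precisely as you anticipate, by finiteness of the cover and of the generating set.
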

\begin{proof}
Because $\aL_{\QQ}$ is semi-ample, the graded $\QQ$-algebra
\begin{displaymath}
	S=\bigoplus_{k\geq 0} H^{0}(\pX_{\QQ},\aL^{\otimes k}_{\QQ})
\end{displaymath}
if of finite type. Also, the $\QQ$-vector space
\begin{displaymath}
	N=\bigoplus_{k\geq 0}H^{0}(\pX_{\QQ},\aL^{\otimes k}_{\QQ}\otimes\aM_{\QQ})
\end{displaymath}
is an $S$-module of finite type. Let us write
\begin{align*}
	&S=\QQ[s_{1},\ldots,s_{r}],\\
	&N=Sn_{1}+\ldots+Sn_{l},
\end{align*}
where the sections $s_{i}$ are homogenous of degrees $d_{i}\geq 1$ and the sections $n_{j}$ are homogenous of degrees $e_{j}$. We can suppose that the sections $s_{i}$ and $n_{j}$ are actually integral, by clearing denominators. Given integers $a_{1},\ldots,a_{r}\geq 0$, we are thus reduced to bound the $L^{2}$ norms of sections of the form
\begin{displaymath}
	s_{1}^{a_{1}}\ldots s_{r}^{a_{r}}n_{j}.
\end{displaymath}
For this, it is convenient to write the log-singular metrics in terms of smooth metrics. Namely, we write
\begin{align}
	&\|\cdot\|_{\ov{\aL}}^{2}=\|\cdot\|_{0}^{2}\varphi_{0},\\
	&\|\cdot\|_{\ov{\aL}^{\otimes e_{j}}\otimes \aM}^{2}=\|\cdot\|_{j}^{2}\varphi_{j},\quad j=1,\ldots,l.
\end{align}
Here the metrics $\|\cdot\|_{0}$ and $\|\cdot\|_{j}$ are smooth. The functions $\varphi$, $\varphi_{j}$, $j=1,\ldots,l$ are smooth on $\pX(\CC)\setminus D$ and have logarithmic growth along $D$. We can cover $D$ by a finite number of coordinate polydisks of radius $2\varepsilon<2/e$, such that the concentric polydisks of radius $\varepsilon$ still form a cover. Furthermore, we can suppose that in the corresponding coordinates $z_{1},\ldots,z_{d-1}$ ($d-1=\dim\pX(\CC)$), we have
\begin{displaymath}
	\varphi,\varphi_{j}\leq B\left(\prod_{i=1}^{d-1}\log |z_{i}|^{-1}\right)^{N},\quad j=1,\ldots,l
\end{displaymath}
on the range $|z_{i}|\leq\varepsilon$, $i=1,\ldots, d-1$, and for some constant $B>0$ and integer $N\geq 1$. 

For the $L^{2}$ norms we have the inequality
\begin{equation}\label{eq:70}
	\begin{split}
	\|s_{1}^{a_{1}}\ldots s_{r}^{a_{r}}n_{j}\|_{L^{2}}^{2}\leq \|s_{1}\|_{0,\infty}^{2a_{1}}&\ldots \|s_{r}\|_{0,\infty}^{2a_{r}}\|n_{j}\|_{j,\infty}^{2}\\
	&\cdot\int_{\pX(\CC)}\varphi^{d_{1}a_{1}+\ldots+d_{r}a_{r}}\varphi_{j}d\mu.
	\end{split}
\end{equation}
We proceed to bound the integral in \eqref{eq:70}. Let us introduce $D=1+\sum_{i}d_{i}a_{i}$. On a coordinate polydisk as before we have
\begin{equation}\label{eq:71}
	\begin{split}
	\int_{D(0,\varepsilon)^{d-1}}\varphi^{d_{1}a_{1}+\ldots+d_{r}a_{r}}&\varphi_{j}d\mu\leq \\
	C' B^{D}
	&\prod_{i=1}^{d-1}\int_{D(0,\varepsilon)}(\log |z_{i}|^{-1})^{ND}|dz_{i}\wedge d\cz_{i}|
	\end{split}
\end{equation}
for some constant $C'>0$ determined by the local expression of the volume form $\mu$. A computation in polar coordinates shows that the right hand side of \eqref{eq:71} can further be bounded by
\begin{displaymath}
	(4\pi)^{d-1}C' \varepsilon^{2}(B\log\varepsilon^{-1})^{DN}(DN)!^{d-1}.
\end{displaymath}
After adjusting $C'$ (depending only on the $\|s_{i}\|_{0,\infty}$ $\|n_{j}\|_{j,\infty}$, $B$, $N$, $\log\varepsilon^{-1}$, the number of polydisks covering $D$ and the integrals of the $\varphi^{D-1}\varphi_{j}$ on the complement of the polydisks, where $\varphi$ and $\varphi_{j}$ are bounded), we finally find
\begin{displaymath}
		\|s_{1}^{a_{1}}\ldots s_{r}^{a_{r}}n_{j}\|_{L^{2}}^{2}\leq C'^{D}(DN)!^{d-1}.
\end{displaymath}
We conclude after observing that $D\leq k+1$ and putting $C=(C'+1)^{1/2}$.
\end{proof}
\begin{remark}
The lemma does not require the divisor $D$ to be defined over $\QQ$, nor to have \emph{strict} normal crossings. Recall that strict normal crossing divisors have smooth irreducible components, while the components of normal crossing divisors can have self-intersections.
\end{remark}
\begin{proof}[Proof of Theorem \ref{thm:HS_cor}]
First of all, because two smooth volume forms are comparable, we can suppose that the fixed volume form $\mu$ comes from a smooth hermitian metric on $T_{\pX(\CC)}$. In its turn, this metric induces a smooth hermitian metric on $\aK$. Notice that, with these choices, the $L^{2}$ structure on $H^{0}(\pX,\aL^{\otimes k}\otimes\aK)$ defined according to \textsection \ref{subsection:det-coh} and the $L^{2}$ structure defined using the metric on $\ov{\aL}^{\otimes k}\otimes\ov{\aK}$ and the volume form $\mu$ coincide.

Since $\aL_{\QQ}$ is semi-ample, we can apply the lemma with $\ov{\aM}=\ov{\aK}$, $\ov{\aN}$. We can choose common constants $C$, $N$ for both choices. Furthermore, bigness of $\aL_{\QQ}$ ensures there exist $k_{0}, k_{1}\geq 1$ such that
\begin{align*}
	&H^{0}(\pX_{\QQ},\aL^{\otimes k_{0}}_{\QQ}\otimes\aK_{\QQ}\otimes\aN^{-1}_{\QQ}(-D))\neq 0\\
	&H^{0}(\pX_{\QQ},\aL^{\otimes k_{1}}_{\QQ}\otimes\aK^{-1}_{\QQ}\otimes\aN_{\QQ}(-D))\neq 0.
\end{align*}
(we considered $D$ with its reduced scheme structure). We choose respective non-vanishing global sections $t_0$ and $t_{1}$. After clearing denominators, we can suppose that
\begin{align*}
	&t_{0}\in H^{0}(\pX,\aL^{\otimes k_{0}}\otimes\aK\otimes\aN^{-1})\\
	&t_{1}\in H^{0}(\pX,\aL^{\otimes k_{1}}\otimes\aK^{-1}\otimes\aN).
\end{align*}
A crucial observation for the sequel is that the pointwise norms $\|t_{0}\|$ and $\|t_{1}\|$ are actually continuous on $\pX(\CC)$, and even vanishing along $D(\CC)$. Indeed, this follows because $t_{0}$ and $t_{1}$ vanish along $D$ and the metrics are log-singular: the holomorphic vanishing of $t_{0}$, $t_{1}$ along $D(\CC)$ cancels the logarithmic singularities of the metrics. Hence $\|t_{0}\|_{\infty},\|t_{1}\|_{\infty}<\infty$.

Multiplication by $t_{0}$ defines an injective morphism between hermitian modules
\begin{displaymath}
	\xymatrix{
		\iota_{0}:H^{0}(\pX,\aL^{\otimes k}\otimes\aN)_{L^{2}}\ar@{^{(}->}[r]	&H^{0}(\pX,\aL^{\otimes(k+k_{0})}\otimes\aK)_{L^{2}}.
	}
\end{displaymath}
whose norm is bounded by $\|t_{0}\|_{\infty}$. The corresponding higher cohomology groups have rank $O(k^{d-2})$, by semi-ampleness (hence nefness) of $\aL_{\QQ}$. Therefore, by the Riemann-Roch theorem, the cokernel of $\iota_{0}$ has rank $O(k^{d-2})$ (the leading terms of the Hilbert-Samuel polynomials are equal). Similarly, there is an injective morphism
\begin{displaymath}
	\xymatrix{
		\iota_{1}:H^{0}(\pX,\aL^{\otimes k}\otimes\aK)_{L^{2}}\ar@{^{(}->}[r]	&H^{0}(\pX,\aL^{\otimes(k+k_{1})}\otimes\aN)_{L^{2}}
	}
\end{displaymath}
whose norm is bounded by $\|t_{1}\|_{\infty}$, and with cokernel of rank $O(k^{d-2})$. We combine these facts together with Lemma \ref{lemma:minima} and the estimate provided by Lemma \ref{lemma:bounded_norms} , to show that there exists a constant $A>0$ such that
\begin{align*}
	&\adeg H^{0}(\pX,\aL^{k+k_{0}}\otimes\aK)_{L^{2}}-\adeg H^{0}(\pX,\aL^{\otimes k}\otimes\aN)_{L^{2}}
		\geq -A k^{d-1}\log k,\\
	&\adeg H^{0}(\pX,\aL^{k+k_{1}}\otimes\aN)_{L^{2}}-\adeg H^{0}(\pX,\aL^{\otimes k}\otimes\aK)_{L^{2}}
		\geq -A k^{d-1}\log k.
\end{align*}
These inequalities together with Theorem \ref{thm:HS} are enough to conclude.
\end{proof}
\begin{remark}
\emph{i}. The rationality of the divisor $D$ allows to chose integral sections $t_0$ and $t_1$ that vanish along $D$. The argument breaks down if $D$ is only supposed to be defined over $\CC$.

\emph{ii}. It follows from the very proof of the theorem that the hypothesis on the metrics can be weakened. It is enough to deal with metrics of finite energy, that are locally bounded on the complement of $D(\CC)$ and with logarithmic growth near $D(\CC)$. The arguments are exactly the same.
\end{remark}

\section{Arithmetic volumes of integral cusp forms on Hilbert modular surfaces}\label{sec:Hilbert}
In this section we apply Theorem \ref{thm:HS} to suitable arithmetic models of toroidal compactifications of Hilbert modular surfaces. Combining with the main results of Bruinier-Burgos-K\"uhn \cite{BBK}, we are able to prove an arithmetic analogue of the classical theorem expressing the dimension of the spaces of cusps forms in terms of a special value of a Dedekind zeta function. To shorten the presentation, we refer to the book \cite{vdG} on Hilbert modular surfaces. We also make use of the results of Rapoport \cite{Rapoport} and Chai \cite{Chai} on arithmetic toroidal compactifications. The arithmetic theory of Hilbert modular surfaces is also summarized in Bruinier-Burgos-K\"uhn \cite[Sec. 5]{BBK}.

We fix a real quadratic number field $F$ of discriminant $D\equiv 1\mod 4$.\footnote{This hypothesis is only required for computations of arithmetic intersection numbers.} Let $\ell$ be a positive integer. Write $\Gamma_{F}(\ell)\subset\SL_{2}(\OO_{F})$ for the principal level $\ell$ congruence subgroup. Let $\mathcal{H}(\ell)$ be the smooth algebraic stack over $\Spec\Int[\zeta_{\ell},1/\ell]$ classifying $\mathfrak{d}^{-1}$ polarized abelian surfaces with multiplication by $\OO_{F}$ and principal level $\ell$ structure\footnote{As usual, we denote by $\mathfrak{d}^{-1}$ the inverse different of $F$, namely the inverse fractional ideal of $\mathfrak{d}=(\sqrt{D})\subset\OO_{F}.$}. The algebraic stack $\mathcal{H}(\ell)$ carries a universal family of abelian schemes. We consider an arithmetic toroidal compactification $\overline{\mathcal{H}}(\ell)$ of $\mathcal{H}(\ell)$. This is an irreducible, smooth and proper algebraic stack over $\Spec\Int[\zeta_{\ell},1/\ell]$, with geometrically connected fibers. It comes equipped with a universal semi-abelian scheme extending the universal family over  $\mathcal{H}(\ell)$. We denote by $\omega_{\ell}$ the dual of the determinant of the relative Lie algebra of the universal semi-abelian scheme. It is possible to chose toroidal data giving rise to a whole tower $\lbrace\ov{\mathcal{H}}(\ell)\rbrace_{\ell\geq 1}$, whose constituents are projective schemes whenever $\ell\geq 3$. The arrows of the tower are of the form
\begin{displaymath}
	\pi_{\ell,\ell'}:\ov{\mathcal{H}}(\ell)\rightarrow\ov{\mathcal{H}}(\ell')[1/\ell],
\end{displaymath}
whenever $\ell'\mid \ell$, and extend the natural projections ``forgetting level structure" $\mathcal{H}(\ell)\rightarrow\mathcal{H}(\ell')[1/\ell]$. The notation $[1/\ell]$ indicates that we inverted the primes dividing $\ell$ in the structure sheaf of a scheme. Furthermore, we have the compatibility
\begin{equation}\label{eq:1000}
	\pi_{\ell,\ell'}^{\ast}\omega_{\ell'}=\omega_{\ell}.
\end{equation}
For $\ell\geq 3$, the sheaves $\omega_{\ell}$ are semi-ample relative to $\Spec\Int[\zeta_{\ell}, 1/\ell]$ and big over the generic fiber. Observe that bigness is a consequence of the existence of a natural birational morphism to the minimal compactification:
\begin{displaymath}
	\pi:\ov{\mathcal{H}}(\ell)_{\QQ}\longrightarrow\mathcal{H}(\ell)^{\ast}_{\QQ},
\end{displaymath}
such that $\pi^{\ast}\OO(1)=\omega_{\ell\,\QQ}^{\otimes m}$ for some very ample line bundle $\OO(1)$. In the sequel, we will consider $\ov{\mathcal{H}}(\ell)$ just as a scheme over $\Spec\Int[1/\ell]$.

To apply our results, we need to extend schematic toroidal compactifications to $\Spec\Int$. For this, we fix from now on relatively prime integers $N,M\geq 3$ and put $\ell=NM$. We chose toroidal data so that $\ov{\mathcal{H}}(\ell)$, $\ov{\mathcal{H}}(N)$ and $\ov{\mathcal{H}}(M)$ are schemes and arise in a tower as above (actually only the levels $N,M,\ell$ are involved).
\begin{lemma}
There exists a projective integral scheme $\mathcal{X}\rightarrow\Spec\Int$, together with a semi-ample invertible sheaf $\aL$, fulfilling the following properties:

i. $\mathcal{X}[1/\ell]=\ov{\mathcal{H}}(\ell)$ and $\aL\mid_{\mathcal{X}[1/\ell]}=\omega_{\ell}$;

ii. there are proper morphisms
\begin{displaymath}
	p_{N}\colon\mathcal{X}[1/N]\rightarrow\ov{\mathcal{H}}(N),\quad p_{M}\colon\mathcal{X}[1/M]\rightarrow\ov{\mathcal{H}}(M),
\end{displaymath}
such that $p_{N}\mid_{\mathcal{X}[1/\ell]}=\pi_{\ell,N}$ and $p_{M}\mid_{\mathcal{X}[1/\ell]}=\pi_{\ell,M}$. Furthermore,
\begin{displaymath}
	 \aL\mid_{\mathcal{X}[1/N]}=p_{N}^{\ast}\omega_{N},\quad \aL\mid_{\mathcal{X}[1/M]}=p_{M}^{\ast}\omega_{M}.
\end{displaymath}
\end{lemma}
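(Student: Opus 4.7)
The strategy is to build $\mathcal{X}$ by gluing along the open cover $\Spec\Int = \Spec\Int[1/N]\cup\Spec\Int[1/M]$, whose intersection is $\Spec\Int[1/\ell]$ by the coprimality of $N$ and $M$. Over this intersection $W := \Spec\Int[1/\ell]$ we want $\mathcal{X}_{W} = \ov{\mathcal{H}}(\ell)$, regarded as a $W$-scheme via the finite map $\Int[\zeta_\ell,1/\ell]\to\Int[1/\ell]$. The problem thus reduces to producing projective integral extensions $\mathcal{X}_U/\Int[1/N]$ and $\mathcal{X}_V/\Int[1/M]$ of $\ov{\mathcal{H}}(\ell)$, endowed with proper morphisms $p_N,p_M$ to $\ov{\mathcal{H}}(N),\ov{\mathcal{H}}(M)$ extending $\pi_{\ell,N},\pi_{\ell,M}$, and then gluing them along $W$.

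To construct $\mathcal{X}_U$, fix an ample line bundle on $\ov{\mathcal{H}}(\ell)/W$ giving a closed immersion $\iota:\ov{\mathcal{H}}(\ell)\hookrightarrow\PP^n_W$; combined with $\pi_{\ell,N}$, this yields a closed immersion $(\pi_{\ell,N},\iota):\ov{\mathcal{H}}(\ell)\hookrightarrow\ov{\mathcal{H}}(N)[1/\ell]\times_W\PP^n_W$ (using properness of $\ov{\mathcal{H}}(\ell)$ over $W$ and separatedness of the target). The target is the $W$-base change of the projective $\Int[1/N]$-scheme $Y_U := \ov{\mathcal{H}}(N)\times_{\Int[1/N]}\PP^n_{\Int[1/N]}$. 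Define $\mathcal{X}_U$ as the scheme-theoretic closure of $\ov{\mathcal{H}}(\ell)$ in $Y_U$: it is a projective integral $\Int[1/N]$-scheme with $\mathcal{X}_U[1/\ell]=\ov{\mathcal{H}}(\ell)$ (by compatibility of scheme-theoretic closure with open restriction), and the first projection supplies the required proper morphism $p_N:\mathcal{X}_U\to\ov{\mathcal{H}}(N)$. An analogous construction yields $\mathcal{X}_V$ and $p_M$. The two extensions agree with $\ov{\mathcal{H}}(\ell)$ over $W$, and so glue to an integral $\Int$-scheme $\mathcal{X}$, proper over $\Spec\Int$. Setting $\aL|_{\mathcal{X}_U}:=p_N^\ast\omega_N$ and $\aL|_{\mathcal{X}_V}:=p_M^\ast\omega_M$, the tower compatibility \eqref{eq:1000} forces both pullbacks to coincide with $\omega_\ell$ on $\mathcal{X}[1/\ell]$, so they glue to a globally defined line bundle on $\mathcal{X}$, semi-ample on each piece because $\omega_N,\omega_M$ are.

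The main obstacle is to upgrade $\mathcal{X}$ from proper to genuinely projective over $\Spec\Int$ and to establish the global semi-ampleness of $\aL$: the construction supplies these properties only piecewise, and they are not automatic under gluing. I would address both simultaneously via the Baily--Borel minimal compactification: $\mathrm{Proj}$ of the $\Int$-algebra of integral Hilbert modular forms for $\Gamma_F(\ell)$ of sufficiently divisible parallel weight furnishes a projective integral $\Int$-model $\mathcal{M}^\ast$ of $\mathcal{H}(\ell)^\ast$ carrying a very ample sheaf $\OO(1)$. Running the closure construction once more, with $\mathcal{M}^\ast$ replacing $\ov{\mathcal{H}}(N),\ov{\mathcal{H}}(M)$, extends the canonical contraction $\ov{\mathcal{H}}(\ell)\to\mathcal{H}(\ell)^\ast$ to a global proper morphism $q:\mathcal{X}\to\mathcal{M}^\ast$. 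The pullback $q^\ast\OO(1)$ is then an ample line bundle on $\mathcal{X}/\Spec\Int$ that coincides with a power of $\aL$ on $\mathcal{X}[1/\ell]$; choosing the initial toroidal data compatibly, so that this identification extends from $W$ to all of $\mathcal{X}$, yields both the projectivity of $\mathcal{X}$ and the semi-ampleness of $\aL$.
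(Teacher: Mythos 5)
The first two paragraphs of your construction are essentially sound and close in spirit to the paper: taking scheme-theoretic closures of the graphs of $\pi_{\ell,N}$ and $\pi_{\ell,M}$ in projective models over $\Int[1/N]$ and $\Int[1/M]$ and gluing along $\Spec\Int[1/\ell]$ does produce an integral scheme $\mathcal{X}$, proper over $\Spec\Int$, with the morphisms $p_{N},p_{M}$ and a glued line bundle $\aL$. (In fact the global semi-ampleness of $\aL$ you worry about is automatic: relative global generation of a fixed power of $\aL$ is local on the base, the base $\Spec\Int$ is affine and is covered by the two affine opens $\Spec\Int[1/N]$ and $\Spec\Int[1/M]$, so semi-ampleness on the two pieces already gives it globally.) The genuine gap is your fix for projectivity. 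The contraction $q\colon\mathcal{X}\rightarrow\mathcal{M}^{\ast}$ to a model of the minimal compactification is birational but not finite --- it contracts the boundary curves --- so $q^{\ast}\OO(1)$ has degree zero on the contracted curves and is only semi-ample and big, never ample; it cannot certify projectivity of $\mathcal{X}$, and properness of $q$ over a projective target does not give projectivity of the source either. Moreover, the existence of a projective integral model $\mathcal{M}^{\ast}$ over $\Spec\Int$ receiving a morphism from your glued $\mathcal{X}$ that extends the contraction, and the claim that $q^{\ast}\OO(1)$ agrees with a power of $\aL$ beyond $\Spec\Int[1/\ell]$ ``by choosing toroidal data compatibly'', are asserted rather than proved and are at least as delicate as the lemma itself.

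The paper sidesteps all of this by never gluing schemes. For each $r\in\lbrace N,M,\ell\rbrace$ it first forms the Zariski closure $\widetilde{\mathcal{H}}(r)$ of $\ov{\mathcal{H}}(r)$ inside $\PP^{d_r}_{\Int}$, a projective integral $\Int$-scheme, and then defines $\mathcal{X}$ as the Zariski closure of the graph of $(\pi_{\ell,N},\pi_{\ell,M})$ inside the single triple product $\widetilde{\mathcal{H}}(\ell)\times\widetilde{\mathcal{H}}(N)\times\widetilde{\mathcal{H}}(M)$. Projectivity over $\Spec\Int$ is then immediate, being a closed subscheme of a projective $\Int$-scheme; the three projections furnish $p_{r}$ on $\mathcal{X}[1/r]$, and only the line bundle is glued from the two pullbacks, exactly as in your second paragraph. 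Replacing your gluing of two local models by this one global graph closure repairs the argument and makes the minimal compactification unnecessary.
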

\begin{proof}
Let $r\in\lbrace N,M,\ell\rbrace$. Fix closed embeddings $\ov{\mathcal{H}}(r)\hookrightarrow\PP_{\Int[1/r]}^{d_r}$. We define $\widetilde{\mathcal{H}}(r)$ to be the Zariski closure of $\ov{\mathcal{H}}(r)$ in $\PP_{\Int}^{d_r}$. The projections $\pi_{\ell,N}$ and $\pi_{\ell,M}$ induce a natural morphism
\begin{displaymath}
	\varphi:\ov{\mathcal{H}}(\ell)\longrightarrow\widetilde{\mathcal{H}}(N)[1/\ell]\times\widetilde{\mathcal{H}}(M)[1/\ell],
\end{displaymath}
since $\widetilde{\mathcal{H}}(N)[1/\ell]=\ov{\mathcal{H}}(N)[1/\ell]$ and $\widetilde{\mathcal{H}}(M)[1/\ell]=\ov{\mathcal{H}}(M)[1/\ell]$. Let $\Gamma$ denote the graph of $\varphi$. We define $\mathcal{X}$ as the Zariski closure of $\Gamma$ inside $\widetilde{\mathcal{H}}(\ell)\times\widetilde{\mathcal{H}}(N)\times\widetilde{\mathcal{H}}(M)$. There are natural proper morphisms
\begin{displaymath}
	q_{r}:\mathcal{X}\longrightarrow\widetilde{\mathcal{H}}(r),\quad r=N,M,\ell.
\end{displaymath} 
Because $\widetilde{\mathcal{H}}(r)[1/r]=\ov{\mathcal{H}}(r)$, the $q_r$ induce proper projections
\begin{displaymath}
	p_{r}:\mathcal{X}[1/r]\longrightarrow\ov{\mathcal{H}}(r),\quad r=N,M,\ell.
\end{displaymath}
Consider the line bundles $\aL_{r}=p_{r}^{\ast}\omega_{r}$ over $\mathcal{X}[1/r]$, for all $r$. By construction and by the compatibility \eqref{eq:1000}, we have
\begin{displaymath}
	\aL_{r}\mid_{\mathcal{X}[1/\ell]}=\omega_{\ell}.
\end{displaymath}
Therefore these line bundles glue into a single line bundle $\aL$ over $\mathcal{X}$. Moreover, because $\omega_{r}$ is semi-ample for all $r$, it follows that $\aL$ is semi-ample too. It is readily checked that $\mathcal{X}$, $\aL$, $p_{N}$, $p_{M}$ satisfy the requirements of the statement.
\end{proof}
\begin{notation}
Let $\mathcal{X}$, $\aL$, $p_{N}$, $p_{M}$ be as in the lemma. We abusively write $\ov{\mathcal{H}}(\ell)=\mathcal{X}$, $\omega_{\ell}=\aL$ over $\Spec\Int$. We will call the data $(\ov{\mathcal{H}}(\ell),\omega_{\ell},p_{N},p_{M})$ a naive integral toroidal compactification over $\Spec\Int$.
\end{notation}
\begin{remark}
\emph{i}. The scheme $\ov{\mathcal{H}}(\ell)$ and the sheaf $\omega_{\ell}$ over $\Spec\Int$ are not canonically defined, not even in terms of the toroidal data: involved in their construction, there are arbitrary choices of closed embeddings of toroidal compactifications into projective spaces. However, any data $(\ov{\mathcal{H}}(\ell),\omega_{\ell},p_{N},p_{M})$ will be enough for our purposes.

\emph{ii}. Naive integral toroidal compactifications are not smooth schemes. To apply our arithmetic Hilbert-Samuel theorem, it is enough to know generic smoothness, which is the case.
\end{remark}
The invertible sheaf $\omega_{\CC}$ over $\ov{\mathcal{H}}(\ell)_{\CC}$ can be endowed with the so called (pointwise) Petersson metric \cite[Sec. 2.2]{BBK}. This is a semi-positive log-singular hermitian metric, with singularities along the normal crossing divisor $\ov{\mathcal{H}}(\ell)(\CC)\setminus\mathcal{H}(\ell)(\CC)$ \cite[Prop. 2.5]{BBK}. In particular, this is an example of semi-positive metric of finite energy. This metric is compatible with pull-back by the natural projections $\pi_{\ell,N}$. The corresponding $L^{2}$ metric on global sections $H^{0}(\ov{\mathcal{H}}(\ell)_{\CC},\omega^{\otimes k}_{\ell\,\CC}\otimes K_{\ov{\mathcal{H}}(\ell)_{\CC}})$ can be identified with the Petersson pairing on cusp forms of parallel weight $2k+2$. For this, recall the Kodaira-Spencer canonical isomorphism $\omega\simeq  K_{\ov{\mathcal{H}}(\ell)_{\CC}}(D)$, where $D$ is the divisor $D=\ov{\mathcal{H}}_{\CC}\setminus\mathcal{H}_{\CC}$. We write $\pet$ to refer to the Petersson pairing.

\begin{theorem}\label{thm:Hilbert}
Let $\ell=NM$ be the product of two coprime integers $N,M\geq 3$. Let $(\ov{\mathcal{H}}(\ell),\omega_{\ell},p_{N},p_{M})$ be a naive integral toroidal compactification over $\Spec\Int$. Endow $\omega_{\ell}$ with the Petersson metric. Let $\aK$ be a line bundle with $\aK_{\QQ}=K_{\ov{\mathcal{H}}(\ell)_{\QQ}}$. Then
\begin{displaymath}
	\begin{split}
	\adeg H^{0}(\ov{\mathcal{H}}(\ell),&\omega_{\ell}^{\otimes k}\otimes\aK)_{\pet}=\\
	&-\frac{k^{3}}{6}d_{\ell}\zeta_{F}(-1)\left(\frac{\zeta_{F}^{\prime}(-1)}{\zeta_{F}(-1)}+\frac{\zeta^{\prime}(-1)}{\zeta(-1)}+\frac{3}{2}+
	\frac{1}{2}\log D\right) +o(k^{3}),
	\end{split}
\end{displaymath}
where $d_{\ell}=[\QQ(\zeta_{\ell}):\QQ][\Gamma_{F}(1):\Gamma_{F}(\ell)]$, and $\zeta_{F}$ is the Dedekind zeta function of $F$.
\end{theorem}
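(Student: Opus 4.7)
The plan is to apply Theorem \ref{thm:HS_cor} to the log-singular hermitian line bundle $\ov{\omega}_\ell$ and then to identify the resulting height with the arithmetic self-intersection number computed by Bruinier-Burgos-K\"uhn.

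First I would verify the hypotheses of Theorem \ref{thm:HS_cor}. The sheaf $\omega_{\ell}$ is semi-ample on the whole naive model $\ov{\mathcal{H}}(\ell)$ by the preceding lemma, so in particular $\omega_{\ell,\QQ}$ is semi-ample and $\omega_{\ell}$ is nef on every vertical fiber. Bigness of $\omega_{\ell,\QQ}$ follows from the birational contraction $\pi\colon\ov{\mathcal{H}}(\ell)_\QQ\to\mathcal{H}(\ell)^{\ast}_\QQ$ to the minimal compactification together with $\pi^{\ast}\OO(1)\simeq\omega_{\ell,\QQ}^{\otimes m}$ for some $m\geq 1$ and the fact that $\OO(1)$ is ample on the Baily-Borel compactification. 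The Petersson metric is semi-positive and log-singular along the normal crossing boundary divisor $D=\ov{\mathcal{H}}(\ell)(\CC)\setminus\mathcal{H}(\ell)(\CC)$, hence in particular of finite energy. Since $\dim\ov{\mathcal{H}}(\ell)=3$, Theorem \ref{thm:HS_cor} applied with $\ov{\aN}=\aK$ (endowed with any auxiliary log-singular hermitian metric with singularities along $D$) yields
\[
    \adeg H^{0}(\ov{\mathcal{H}}(\ell),\omega_{\ell}^{\otimes k}\otimes\aK)_{\pet}=\frac{k^{3}}{6}\,h_{\ov{\omega}_{\ell}}(\ov{\mathcal{H}}(\ell))+o(k^{3}).
\]

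It then remains to evaluate $h_{\ov{\omega}_\ell}(\ov{\mathcal{H}}(\ell))=\ac1(\ov{\omega}_\ell)^{3}$, the right-hand side being the Burgos-Kramer-K\"uhn arithmetic self-intersection number. The plan is to reduce this to the computation \cite[Thm.~6.4]{BBK} of $\ac1(\ov{\omega})^{3}$ on an arithmetic toroidal compactification of the Hilbert modular stack at level $1$. I would proceed in two steps. First, I would show that the number $\ac1(\ov{\omega}_\ell)^{3}$ is independent of the auxiliary choices (closed embeddings into projective spaces) entering the definition of the naive integral model, by exploiting the two morphisms $p_{N}\colon\ov{\mathcal{H}}(\ell)[1/N]\to\ov{\mathcal{H}}(N)$ and $p_{M}\colon\ov{\mathcal{H}}(\ell)[1/M]\to\ov{\mathcal{H}}(M)$ to the smooth arithmetic toroidal compactifications at levels $N$ and $M$, together with the compatibilities $p_{r}^{\ast}\omega_{r}=\omega_{\ell}$. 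Since $\gcd(N,M)=1$, at every rational prime at least one of the two smooth models is available for localizing the intersection computation, so the vertical contributions at bad primes are uniquely determined by those on a smooth integral model. Second, I would compare the arithmetic self-intersection number on a smooth integral toroidal compactification at level $\ell$ with the corresponding number on the level $1$ stack over $\Spec\Int$: the forgetful morphism of level structures is generically \'etale of degree $[\Gamma_F(1):\Gamma_F(\ell)]$, while the base change from $\Int$ to $\Int[\zeta_\ell]$ contributes a factor $[\QQ(\zeta_\ell):\QQ]$, and functoriality of arithmetic intersection numbers along generically \'etale covers gives the total multiplier $d_\ell$. Plugging in BBK's formula yields the claimed value of $h_{\ov{\omega}_\ell}(\ov{\mathcal{H}}(\ell))$.

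The main obstacle is the first step of the height computation: since $\ov{\mathcal{H}}(\ell)$ is not regular at primes dividing $\ell$, Burgos-Kramer-K\"uhn's intersection theory has to be used with care for non-regular arithmetic varieties. Showing that the vertical contributions at these bad primes are correctly captured through the auxiliary projections $p_N$ and $p_M$, and coincide with those dictated by $d_\ell$ together with the level $1$ computation, is the most delicate point and justifies \emph{a posteriori} the construction of the naive model via a coprime factorization $\ell=NM$.
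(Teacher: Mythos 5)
Your proposal is correct and follows essentially the same route as the paper: apply the arithmetic Hilbert--Samuel theorem (the paper uses Theorem \ref{thm:HS} since the bundle is already in adjoint form, but notes Theorem \ref{thm:HS_cor} works equally well), then compute $h_{\ov{\omega}_{\ell}}(\ov{\mathcal{H}}(\ell))$ by pulling back along $p_{N}$ and $p_{M}$ to the smooth models, obtaining the value modulo $\sum_{p\mid r}\QQ\log p$ for $r=N,M$ and using coprimality to pin down the exact real number --- which is precisely your ``at every prime at least one smooth model is available'' argument. The only cosmetic difference is that the paper invokes \cite[Thm.~6.4]{BBK} directly at level $r$ (where the factor $d_{r}$ already appears, with $[\Gamma_{F}(r):\Gamma_{F}(\ell)]d_{r}=d_{\ell}$) rather than reducing further to the level-$1$ stack.
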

\begin{proof}
Because the sheaf $\omega_{\ell}$ is semi-ample and $\omega_{\ell\, \QQ}$ is big and the Petersson metric has finite energy, we can apply Theorem \ref{thm:HS}. Hence, it is enough to check the formula
\begin{equation}\label{eq:40}
	h_{\ov{\omega}_{\ell}}(\ov{\mathcal{H}}(\ell))=-d_{\ell}\zeta_{F}(-1)\left(\frac{\zeta_{F}^{\prime}(-1)}{\zeta_{F}(-1)}+\frac{\zeta^{\prime}(-1)}{\zeta(-1)}+\frac{3}{2}+
	\frac{1}{2}\log D\right).
\end{equation}
Let $r\in\lbrace N,M\rbrace$. By the properties of naive toroidal compactifications and the compatibility of the Petersson metric with pull-back by $\pi_{\ell,N}$ and $\pi_{\ell,N}$, we see that
\begin{displaymath}
	p_{r}^{\ast}\ov{\omega}_{r}=\ov{\omega}_{\ell}\quad\text{on}\quad\ov{\mathcal{H}}(\ell)[1/r].
\end{displaymath}
By the functoriality properties of heights, we derive an equality in  $\RR/\sum_{p\mid r}\QQ\log p$
\begin{equation}\label{eq:41}
	\begin{split}
		h_{\ov{\omega}_{\ell}}(\ov{\mathcal{H}}(\ell))=&h_{p_{r}^{\ast}\ov{\omega}_{r}}(\ov{\mathcal{H}}(\ell))\\
		&=(\deg p_{r\,\QQ})h_{\ov{\omega}_{r}}(\ov{\mathcal{H}}(r))\\
		&\hspace{0.5cm}=-[\Gamma_{F}(r):\Gamma_{F}(\ell)]d_{r}\zeta_{F}(-1)\\
		&\hspace{1cm}\cdot\left(\frac{\zeta_{F}^{\prime}(-1)}{\zeta_{F}(-1)}+\frac{\zeta^{\prime}(-1)}{\zeta(-1)}+\frac{3}{2}+
	\frac{1}{2}\log D\right).
	\end{split}
\end{equation}
In the last equality we appealed to \cite[Thm. 6.4]{BBK}. Notice that $[\Gamma_{F}(r):\Gamma_{F}(\ell)]d_{r}=d_{\ell}$. Because $N,M$ are coprime, the relation \eqref{eq:41} for $r=N,M$ (with values in $\RR/\sum_{p\mid r}\log p$) implies the desired equality \eqref{eq:40} (with values in $\RR$). We conclude by Theorem \ref{thm:HS}.
\end{proof}
\bibliographystyle{amsplain}
\bibliography{biblioHS}
\end{document}